%% file: main.tex
\newif \ifHAL
\HALfalse
\PassOptionsToPackage{dvipsnames}{xcolor}

\ifHAL
\documentclass[12pt]{article}
\usepackage{geometry}
\else
\documentclass[12pt,leqno]{siamart251104}
\fi

\usepackage[T1]{fontenc}
\usepackage[utf8]{inputenc}
\usepackage[american]{babel}
\usepackage{color}
\usepackage{amssymb}
\ifHAL
\usepackage{amsmath,amsthm}
\fi
\usepackage{siunitx}
\usepackage{graphicx}
\usepackage{subcaption}
\usepackage{placeins}
\usepackage{xspace}
\usepackage{bm,bbm}
\usepackage{centernot}
\usepackage[numbers,sort]{natbib}
\let\cite=\citet
\usepackage{enumerate,array}
\usepackage{dsfont}
\usepackage{enumitem}
\setlist{leftmargin=20pt}
\usepackage{Macros/mydef}
\usepackage{geometry}
\usepackage{comment}
\usepackage{multirow}
\usepackage{diagbox}

\usepackage{algorithm}
\usepackage{algpseudocode}

\begin{document}
\newcommand\footnotemarkfromtitle[1]{%
\renewcommand{\thefootnote}{\fnsymbol{footnote}}%
\footnotemark[#1]%
\renewcommand{\thefootnote}{\arabic{footnote}}}

\title{Locally conservative redistribution limiting and
  applications to the approximation of conservation equations, Part II}

\author{
  Jean-Luc~Guermond\footnotemark[1]
}

\date{Draft version \today}

\maketitle

\renewcommand{\thefootnote}{\fnsymbol{footnote}}

\footnotetext[1]{Department of Mathematics, Texas A\&M University 3368 TAMU, College
  Station, TX 77843, USA.}%

\footnotetext[2]{This material is based upon work supported in part
  by the National Science Foundation grant DMS2110868,
  the Air Force Office of Scientific Research, USAF, under grant/contract number FA9550-18-1-0397,
  the Army Research Office, under grant number W911NF-19-1-0431,
  and the U.S. Department of Energy by Lawrence Livermore National
  Laboratory under Contracts B640889.}

\renewcommand{\thefootnote}{\arabic{footnote}}
\begin{abstract}
 A non-intrusive, discretization and PDE agnostic limiting technique is
 introduced. The method is conservative and can be applied to
 high-order approximations including spectral methods. It can be
 applied to time-dependent and time-independent problems. It can be used
 for elliptic, parabolic and hyperbolic problems.
\end{abstract}

\ifHAL
\noindent
\textbf{Keywords.} 
limiting, advection equation, radiation transport equation, stiff sources, 
conservation equations, asymptotic preserving, invariant domains.

\medskip\noindent
\textbf{MSC.} 35L02, 35L65, 65M12, 65M60, 65M70

\else
\begin{keywords}
limiting, advection equation, radiation transport equation, stiff sources, 
conservation equations, asymptotic preserving, invariant domains.
\end{keywords}

\begin{AMS}
35L65, 65M12, 65M60, 76V05
\end{AMS}

\fi

\pagestyle{myheadings} \thispagestyle{plain}
\markboth{}{}


\section{Introduction}

This paper is the second part of a work started in
\cite{Guermond_Wang_JCP_2025} about limiting.  It is also an extension
of the ``repair'' method introduced in
\cite[\S2.3]{Kucharik_Shashkov_Wendroff_JCP_2003} and the ``sweeping
method'' proposed in
\cite{Liu_Cheng_Shu_JSC_2017,Griffin_Shu_RMathSci_2026}.

\subsection{Objectives and content of the paper}
This work is part of a long term research project started in 2008 with
\cite{Guermond_Pasquetti_CRASS_2008,Guermond_Pasquetti_Popov_2011}
about the approximation of nonlinear conservation equations using
spectral and high-order spectral element methods.  The question
addressed in
\citep{Kucharik_Shashkov_Wendroff_JCP_2003,Liu_Cheng_Shu_JSC_2017,Guermond_Wang_JCP_2025,Griffin_Shu_RMathSci_2026}
is about limiting the approximations of the solutions to conservation
conservation equations without invoking squeezing techniques akin to
the Flux-Transport-Correction-inspired (FCT) (see
\cite{Boris_Book_JCP_1973,Zalesak_1979}).  Here, by conservation
equations, we mean partial differential equations (PDEs) endowed with
conservation properties.  These could be steady-state PDEs (hyperbolic
or elliptic), time-dependent hyperbolic or parabolic PDEs, or PDEs of
mixed types. The nature of the PDEs or approximations thereof does not
really matter as what is done in
\citep{Liu_Cheng_Shu_JSC_2017,Guermond_Wang_JCP_2025,Griffin_Shu_RMathSci_2026}
and what is proposed in this paper is essentially a consistent
conservative non intrusive post-processing methodology of whichever
approximation is available.

Recall that in the FCT paradigm a ``non-admissible'' but accurate
high-order solution is squeezed in the direction of a ``good''
low-order solution, while maintaining conservation, until the squeezed
high-order solution is in some targeted admissible set. The squeezing
idea has been used with immense success in the context of the
discontinuous Galerkin approximation of nonlinear conservation
equations, see \eg \cite{Zhang_Shu_JCP_2010,Zhang_Shu_ProcA_2011}, and
in the context of finite volume literature, see \eg
\cite{Sanders_1988}, \cite{Liu_Tadmor_1998}, \cite{KPP_2007}.  But the
FCT strategy fails when no ``good'' and ``robust'' low-order solution
is available, or when the high-order and low-order approximations do
not carry exactly the same ``generalized'' mass, or when the stencil
of the high-order approximation is very large. The purpose of the
conservative redistribution limiting methodologies introduced in
\citep{Kucharik_Shashkov_Wendroff_JCP_2003,Liu_Cheng_Shu_JSC_2017,Guermond_Wang_JCP_2025,Griffin_Shu_RMathSci_2026}
is to entirely bypass the high-order-to-low-order squeezing paradigm
(henceforth called FCT paradigm) to avoid these problems. Since
contrary to FCT-inspired squeezing techniques the proposed new methods
do no rely on the existence of a low-order approximation that carries
exactly the same generalized mass, they can be applied to high-order
approximations upon using an appropriate change of basis.  The key
novelties of the present work are threefold: (1) We extend
\citep{Liu_Cheng_Shu_JSC_2017,Guermond_Wang_JCP_2025,Griffin_Shu_RMathSci_2026}
to systems, \ie the new proposed method accounts for state variables
taking values in $\Real^q$ with $q\ge 1$; (2) We propose a variation
of the method that is cell-based and therefore may be better
adapted to high-performance computing (using vectorization, GPUs, or
matrix free softwares for instance); (3) We show (and confirm the
observation made in \citep[\S3]{Liu_Cheng_Shu_JSC_2017}) that the
method can be applied to spectral methods, which was a question we
have been working on and off since the 2008 paper
\citep{Guermond_Pasquetti_CRASS_2008} without great success until now.

The paper is organized as follows. Notation and key preliminary
details are introduced in \S\ref{Sec:setting_for_abstract_limiting}.
The generic abstract algorithm mentioned above is introduced in
\S\ref{Sec:Jacobi}. This algorithm is not specific to any particular
discretization method. The main results of this section are
Lemma~\ref{Lem:cell.local_general_limiting} and
Lemma~\ref{Lem:cell.global_general_limiting}.  Examples of discrete
setting satisfying the assumptions of the generic abstract algorithm
are constructed in \S\ref{Sec:Examples_of_discretzation_settings}.
Numerical illustrations are reported in \S\ref{Sec:numerical
  illustrations}.

\subsection{Literature context for the present work}

The main problem with techniques based on the squeezing idea of the
Flux Transport Correction by \cite{Boris_Book_JCP_1973,Zalesak_1979}
(and generalization thereof for systems, like convex limiting by
\citep{Guermond_Nazarov_Popov_Tomas_2018,Guermond_Popov_Tomas_CMAME_2019})
is that these methods require to have at hand an invariant-domain
preserving (IDP) low order method that carries exactly the same mass
(on average) as the high-order one. While the requirement of having
two methods that carry the same mass is not a severe constraint when
using piecewise linear approximations, it becomes cumbersome when
using high-order elements, and it is even a dead end when one wants to
use spectral methods (\eg Fourier approximation). The reader is also
referred \cite{TurekKuzmin2002}, \cite{KuzminLoehnerTurek2012}, and
the literature therein for other continuous finite element extensions
of the FCT idea. Two key difficulties with the FCT strategy have been
identified in the PhD theses of \cite[\S3.3.2.1]{Fahad_2015}, defended
in 2015, and \cite[\S4.2.2]{Quezada_2016}, defended in 2016, which I
supervised.
(i) The first difficulty is that low-order
invariant-domain preserving methods based on high-order polynomial
degree approximations are not robust with respect to the polynomial
degree.  The bound on the Courant–Friedrichs–Lewy (CFL) number that is
required to maintain the invariant-domain preserving property
decreases very fast as the polynomial degree increases;
(ii) The
second difficulty is that low-order IDP methods are more and more
diffusive as the polynomial degree increases because the connectivity
in the stencil grows like $k^d$, where $k$ is the polynomial degree
and $d$ the space dimension. These two observations are also reported
in \cite[\S3.3]{Anderson_etal_2017}, although it seems that Bernstein
finite elements behave a bit better than Lagrange elements in this
respect as observed in \citet[S4.2.2]{Quezada_2016} and
\citet[\S3.3]{Anderson_etal_2017}.
(iii) Let me add also that FCT-like strategies are highly intrusive in
that they require that the user has access to the low-order and
high-order fluxes in order to mix them. Hence, FCT-like methods are
neither approximation agnostic nor performance portable.

In the wake of the two PhD theses of \citet{Fahad_2015} and
\citet{Quezada_2016}, it was clear that the problems mentioned above
(i)--(iii) could in principle be solved by invoking some hierarchical
decomposition of the space approximation. But progresses toward
constructing a hierarchical method having the right properties have
been slow. The main roadblock on the way was that we insisted that the
low-order method and the high-order method carry exactly the same
generalized mass in order to be able to apply the Zalesak paradigm
(\ie FCT or variations thereof).  A solution to this conundrum for
fourth-order finite differences has proposed in \cite{ErnGu:21+} but
generalization to non-uniform finite elements was not straightforward.
An original idea in this direction in the context of high-order
continuous finite element residual distribution schemes has been
proposed in a breakthrough paper by
\cite{Abgrall_Viville_Beaugendre_Dobrzynski_2017}.  In the wake of
\citep{Abgrall_Viville_Beaugendre_Dobrzynski_2017}, an IDP method
using high-order continuous finite elements has been proposed in
\citep{Guermond_Nazarov_Popov_CMAME_2024}. The first key idea in
\citep{Guermond_Nazarov_Popov_CAMME_2024}, is to optimize the CFL number by
restricting the low-order method to having a stencil only based on the
next neighbors. The second key idea is to slightly modify the fluxes
of the low-order method so that it carries exactly the same
generalized mass as the high-order method while still being
conservative and consistent (as in
\citep{Abgrall_Viville_Beaugendre_Dobrzynski_2017}). The proposed
technique is robust with respect to the polynomial degree and is
exactly conservative on patches. But, the construction of the modified
fluxes is technical and the algorithmic complexity (understandably)
grows like $k^d$. In any case, we do not see how to extend this idea
to spectral methods. Note in passing that the idea of constructing an
IDP low-order method with modified fluxed in the spirit of
\citep{Abgrall_Viville_Beaugendre_Dobrzynski_2017} has also been
explored with remarkable success in the context of discontinuous
finite element by \cite{Pazner_2021}.

In the light of the argumentation above, and in the continuation of
\citep{Guermond_Wang_JCP_2025} and as in
\citep{Kucharik_Shashkov_Wendroff_JCP_2003,Liu_Cheng_Shu_JSC_2017,Griffin_Shu_RMathSci_2026}
we now adopt a new point of view that simply consists of completely
abandoning Zalesak's squeezing paradigm.  Limiting is simply done by
invoking locally conservative redistribution of the states (or
conservative redistribution limiting; CRL for short).  This idea has
seemingly been pioneered in
\cite[\S2.3]{Kucharik_Shashkov_Wendroff_JCP_2003} in the context of
arbitrary Lagrangian–Eulerian method to limit the density using
specific information from the underlying mesh. It has been branded
``repair'' therein. The ``repair'' method has been analyzed in
\cite{Shashkov_Wendroff_JCP_2004} still focusing on the Euler
equations and arbitrary Lagrangian–Eulerian methods. The convergence
of the method has been analyzed in
\cite{Despres_Loubere_IJ_Finite_Vol_2006} when applied to the
one-dimensional linear transport equation.  The conservative
redistribution limiting idea has been explored in
\cite[\S3.2.1]{Laiu_Hauck_JSC_2019} in the context of the
time-dependent state neutron transport equation using the sweeping
technique of \citep{Liu_Cheng_Shu_JSC_2017} and in
\citep{Guermond_Wang_JCP_2025} in the context of the steady state
neutron transport equation, and in both cases CRL has been
demonstrated to be efficient and robust.  The method has been shown in
\citep{Liu_Cheng_Shu_JSC_2017,Griffin_Shu_RMathSci_2026} to be
effective to limit explicit approximations of the compressible Euler
equations using various approximation techniques (including Fourier).
Notice finally that as originally formulated the ``repair'' method and
the sweeping methods were sequential (\ie could not be parallelized or
vectorized) and would depend on the enumeration that is used to
traverse the states, see \eg \cite[\S5.3]{Laiu_Hauck_JSC_2019},
\cite[\S2.3]{Griffin_Shu_RMathSci_2026} and
\cite[\S5.3]{Loubere_Staley_Wendroff_JCP_2006}. A fix for the
``repair'' method consisting of applying a few times a local CRL steps
and applying one global CRL step is proposed in
\cite[\S6.1]{Loubere_Staley_Wendroff_JCP_2006}. We adopt this idea
latter in the paper.

Notice finally that conservative redistribution limiting not using the
squeezing paradigm can be done by invoking nonlinear optimization like
in \eg \cite{Rider_Kothe_AIAA_1997},
\cite{Bochev_Ridzal_Peterson_JCP_2014},
\cite{LiuRiviereShenZhang_SIAM_JSC_2024},
\cite{Liu_Milesis_Shu_Zhang_JCP_2026}. The approach adopted in the
present paper is to avoid resorting to constrained optimization or
convex minimization. The price paid though for this choice is that the
local conservative redistribution limiting technique proposed in the
paper is not guaranteed to converge in a fixed number of iterations
independent of the cardinality of states to be limited. Guaranteed
limiting is obtained through a global conservative redistribution
limiting as in \cite[\S6.1]{Loubere_Staley_Wendroff_JCP_2006}.

The key contributions of the paper are as follows:
(1) We propose a generic abstract Conservative Redistribution Limiting
(CRL) algorithm.
(2) We describe a cell-based variation of the method that is well adapted
to high-performance computing as it only involves cell-wise stencils.
(3) We describe how the method can be used in the context of very
high-order methods using both modal and nodal representations of the
approximation (spectral elements or Fourier approximation).
(4) The performance of the proposed method is numerically illustrated.

\section{Iterative limiting algorithm: notation, heuristics}
\label{Sec:setting_for_abstract_limiting}

Since the algorithms presented in the paper are not specific to any
particular discretization or system of conservation equations, we
start by introducing some abstract notation to reflect this
generality.

\subsection{The convex invariant set and heuristics} 
Let $q$ be a natural number larger than or equal to $1$. We assume to
be given a finite collection of states (\ie vectors) in $\Real^q$,
which we call unlimited numerical states.  These states may be the
outputs of some algorithm solving some nonlinear conservation equation
(whether these are time-dependent or not does not matter) or some PDE
on integro-differential equation for which some notion of conservation
holds and some invariant domain property can be identified.  The key
assumption of the entire paper is that the solutions to the system of
nonlinear equations in question take values in a convex set $\calA$ in
$\Real^q$, which we call invariant domain, and a notion of
conservation holds.

Our objective is to devise a conservative post-processing technique
that maps the given unlimited numerical states to the invariant domain
$\calA$ if there are not already there. One important idea that makes
the whole enterprise reasonable and doable is that we assume that the
unlimited numerical states are consistent approximations of exact
states that are actually members of $\calA$.  By consistent, we mean
that the unlimited numerical states are already almost all in $\calA$
or are very close to the boundary of $\calA$.  This is indeed
naturally the case if the unlimited numerical states are computed by
using some time stepping strategy with some time step $\dt$ small
enough using state data that are already in $\calA$. In this case, the
distance of the numerical states to the boundary of $\calA$ cannot be
larger than $\calO(\dt)$. In principle, limiting the numerical states
should not destroy the approximation properties of the approximation
method since it is a consistent operation, \ie the exact states are
members of $\calA$ after all.

\subsection{Enumeration}\label{Sec:Enumeration}

Let us assume to be given a collection of states in $\Real^q$
enumerated over some index set $\calV$, say $\{\bsfu_i\}_{i\in\calV}$
with $\bu_i\in\Real^q$ for all $i\in\calV$.  We henceforth say that
$\calV$ is the vertex index set and the states in
$\{\bsfu_i\}_{i\in\calV}$ are the unlimited states.  To formalize the
conservation properties alluded to above, which we recall is a central
notion in the proposed algorithm, we assume to be given a collection
of nonnegative real numbers $\{m_i\}_{i\in\calV}$ which we call lumped
masses coefficients.  We call total ``generalized mass'' of the state
$\{\bsfu_i\}_{i\in\calV}$ the $\Real^q$-valued vector $\bcalM \eqq
\sum_{i\in\calV} m_i \bsfu_i$. Notice that $\calM$ is
$\Real^q$-valued. Our objective is to construct a post-processing
technique for the unlimited states while maintaining $\bcalM$
constant.

We also assume to be given a subset $\calV(\calT)$ of the power set
$\calP(\calV)$.  Here $\calT$ is the index set that is used to
enumerate the members of $\calV(\calT)$. For every index $K$ in the
index set $\calT$ we use the notation $\calV(K)$ instead of $\calV_K$
to denote the member of $\calV(\calT)$ with index $K$. The subset
$\calV(\calT)\subset \calP(\calV)$ is then represented as follows:
$\calV(\calT)=\bigcup_{K\in\calT}\calV(K)$, where for all $K\in\calT$
the subset $\calV(K)\subset\calV$ is the $K$-th member of
$\calV(\calT)$. Recall that $\calV(K)$ is a collection of indices in
$\calV$. We henceforth call the members of
$\calT$ cell indices, and we call $K$ cell index (or cell for
short).

The definition of the set $\calV(\calT)$ allows us to define a second
subset of the power set $\calP(\calT)$ that we enumerate with the
index set $\calV$ and that we denote $\calT(\calV)$.  For every index
$i$ in the index set $\calV$ we use the notation $\calT(i)$ instead of
$\calT_i$ to denote the member of $\calT(\calV)$ with index $i$. The
subset $\calT(\calV)\subset \calP(\calT)$ is then represented as
follows: $\calT(\calV)=\bigcup_{i\in\calV}\calT(i)$. The set $\calP(\calT)$ is defined
so that the following holds true for all $i$
in $\calV$ and all $K\in\calT$: 
\begin{equation}
  \left(K\in \calT(i)\right) \Longleftrightarrow
\left(i\in \calV(K)\right). \label{KinTi_iif_iinVK}
\end{equation}

To be able to localize the limiting operation over cells, and
therefore maintain conservation locally, we further refine the setting
by assuming that the notion of lumped mass can be redistributed over
cells. More precisely, for all $i\in\calV$ we assume that there exists
a collection of nonnegative numbers $\{m_{i,K}\}_{K\in\calT(i)}$ so
that%
\begin{equation}\label{def_mik}%
  m_i = \sum_{K\in\calT(i)} m_{i,K}\quad \text{with}\quad m_{i,K}\ge 0 \quad \forall i\in\calV,
  \forall K\in\calT(i).%
\end{equation}
The definitions 
\eqref{KinTi_iif_iinVK} and \eqref{def_mik} imply that the
following two identities hold:
\begin{equation}
\sum_{K\in\calT} \sum_{i\in \calV(K)}
 = \sum_{i\in\calV} \sum_{K\in\calT(i)} ,
 \quad\text{and}\quad \sum_{i\in\calV} \sum_{K\in\calT(i)} m_{i,K} \bu_i=\sum_{i\in\calV} m_{i} \bu_i.
 \label{Fubini}
\end{equation}
Examples of enumerations in various contexts are collected in
\S\ref{Sec:Examples_of_discretzation_settings}.

\subsection{Limiting}  \label{Sec:abstract_limiting}

Our objective is to find a conservative limiting technique for the
states $\{\bu_i\}_{i\in\calV}$. More precisely, given a convex set
$\calA$ in $\Real^q$,
 and denoting $I\eqq \card(\calV)$, we want to construct
an algorithm with $\calO(I)$ complexity that maps the
states $\{\bu_i\}_{i\in\calV}$ from $\Real^q$ to $\calA$ and that is
conservative. In other words we want to find a (nonlinear) mapping
\begin{equation} \label{conservative_limiting}
  \calL:(\Real^q)^I \to \calA^I\qquad
  \text{so that}\qquad \sum_{i\in\calV} m_i \bu_i = \sum_{i\in\calV} m_i (\calL(\bu))_i.
\end{equation}

In many applications, limiting may not refer to one process only but
may be instead refer to a sequence of processes.  For instance, for scalar
conservation equations, one may want to enforce first the minimum
principle, then enforce the maximum principle, then finally enforce
some local entropy inequality (one may even consider more than one
entropy).  For the compressible Euler equations, one may want to make
sure that the density is positive; then make sure that the density is
below the maximal compressibility threshold if a co-volume equation of state is
used; then make sure that the entropy satisfies the minimum principle;
then possibly enforce some maximum local bound on the kinetic energy
or any over relevant and consistent constraint. To address the
situation described above, we will take as a template the general
limiting strategy described in Remark~7.23 in
\citep{Guermond_Popov_Tomas_CMAME_2019}; see also \S3.1 in
\citep{Guermond_Maier_popov_Saavedra_Tomas_JSC_2024}. In particular,
we assume that limiting is done by using quasiconcave functions (the
reader who is not familiar with the term may replace it by concave
functions).

\begin{definition}[Quasiconcavity]\label{def:quasiconv} Given a convex set
  $\calC \subset \Real^q$, we say that a function $\Psi:\calC \to \Real$ is
  quasiconcave if the set $L_\chi(\Psi) := \{\bu\in \calC \st \Psi(\bu)
  \ge \chi \}$ is convex for every $\chi\in \Real$. The sets
  $\{L_\chi(\Psi)\}_{\chi\in \Real}$ are called \emph{upper level sets} or
  \emph{upper contour sets}.
\end{definition}

We now list the properties we are interested in and that we are going to use to limit
the sates $\{\bu_i\}_{\calV}$. We assume that there is a natural number $L\ge 1$,  
a collection of $L+1$ subsets $\{\calB_l\}_{l\in\intset{0}{L}}$ in
$\Real^q$, and a collection of $L$ continuous quasiconcave functionals
$\{\Psi_l\}_{l\in\intset{1}{L}}$ so that the following
properties hold true:%
\begin{subequations}
  \label{Ass_Psi}%
  \begin{alignat}{2}%
    &\calB_{L}\subset \calB_{L-1}\subset\ldots\subset \calB_0\eqq \Real^q, \qquad && \label{Ass_Psi:1}
    \\
    &\Psi_l:\calB_{l-1} \to \Real  && \forall l\in \intset{1}{L}\\
    &\calB_{l}=\{ \bu\in \calB_{l-1}\st \Psi_{l}(\bu)\ge 0\}, &&  \forall l\in\intset{1}{L},\label{Ass_Psi:2}
    \\
    &\calB_{L} \subset \calA, && 
    \label{Ass_Psi:3}
  \end{alignat}
\end{subequations}
Notice in passing that $\calB_0=\Real^q$ is convex by construction.
Moreover, the definition $\calB_{l} = L_0(\Psi_l)$ for all
$l\in\intset{1}{L}$ implies that $\calB_{l}$ is the upper 0-level sets
of $\Psi_l$ and $\calB_{l}$ is therefore convex. Hence all the subsets
$\{\calB_l\}_{l\in\intset{0}{L}}$ are convex. All the sets
$\{\calB_l\}_{l\in\intset{1}{L}}$ are also closed as the functional
$\{\Psi_l\}_{l\in\intset{1}{L}}$ are continuous.

Finally, as it has been well established in the literature that
limiting is most effective when localized (see \eg
\cite{Kolgan_JCP2011_1972} \cite{Boris_Book_JCP_1973},
\cite{VANLEER_JCP_1974}, \cite{Zalesak_1979}).
Hence, we furthermore assume to have at hand a local version of
$\Psi_l$ for all $i\in\calV$ which we denote $\Psi_l^i:\calB_{l-1} \to
\Real$.  To fix the idea and facilitate the reading of the paper, it may be
helpful to the reader to think of the following example:
\begin{equation}
  \Psi_l^i(\bv) \eqq \Psi_l(\bv) - \Psi_{l}^{i,\min}
  \quad \text{with}\quad \Psi_{l}^{i,\min}\ge 0\quad \forall i\in\calV,
  \label{def_of_Psi_i}
\end{equation}
where $\Psi_{l}^{i,\min}$ is a known a priori local bound for the
$i$-th state, $i\in\calV$, for the $l$-th functional. Notice in
passing that this assumption is such that enforcing $\Psi_l(\bv) \ge
\Psi_{l}^{i,\min}$ implies $\Psi_l^i(\bv)\ge 0$, meaning that
$\bv\in\calB_l$.  The underlying idea here is that enforcing many good
local bounds is better than enforcing one global bound. All the
numerical tests reported in the paper are done with this type of local
functional. How the numbers $\Psi_{l}^{i,\min}$ are constructed is
explained latter, but this construction is irrelevant at the moment.

\section{Jacobi Cell limiting} \label{Sec:Jacobi}

We propose a Jacobi-like algorithm based on a twofold strategy: we
first balance the bounds locally on each cell, then average the
local limited state at the end of the loop over the cells. The
operations done on each cell are independent of the operations done on
the other cells. This approach solves the issues with parallelism
identified for ``repair'' methods in
\cite[\&5.3]{Loubere_Staley_Wendroff_JCP_2006}.

\subsection{The algorithm}
Let us now describe the algorithm. We assume to be at the $l$-th stage
of the limiting process as defined in \eqref{Ass_Psi}, where
$l\in\intset{1}{L}$. That is to say we assume that the unlimited
numerical states are all in $\calB_{l-1}$, \ie
\begin{equation}
\Psi_{l-1}(\bu_i)\ge 0\qquad \forall i\in\calV.
\end{equation}
The next task at hand consists of constructing a conservative
(nonlinear) limiting process $\calL$ so that%
\begin{equation}
  \Psi_l((\calL(\bu))_i)\ge 0 \quad \forall i\in\calV.
\end{equation}
The symbol $\calL$ should be indexed with $l$, but we refrain doing so
to simplify the notation.  We now drop the index $l$ when
the context is unambiguous (but revive it when it is useful).

The proposed algorithm is a loop over the cells.
For each cell index $K$ in $\calT$, the algorithm consists
of performing the following steps:\medskip

\newStep

\step{cell.lim.local.mass.step1} Recalling that 
$\Psi^i(\bv)$ is a localized, possibly shifted, version of $\Psi(\bv)$, the index set
$\calV(K)$ is partitioned into three sets $\calV(K)^+$, $\calV(K)^-$,
and $\calV(K)^0$:%
\begin{subequations}\label{def_of_the_Vks_global_lumped_mass}
  \begin{align}
  \calV(K)^+ &\eqq \{i\in\calV(K) \st \Psi^i(\bu_i) > 0\},\\
  \calV(K)^- &\eqq \{i\in\calV(K) \st \Psi^i(\bu_i) < 0\},\\
  \calV(K)^0 &\eqq \{i\in\calV(K) \st \Psi^i(\bu_i) = 0\}.
  \end{align}
\end{subequations}
Notice that the above partition of $\calV(K)$ depends on the states
$\{\bu_i\}_{i\in\calV}$ and we should probably use a super-index $\bu$
to recall this dependence, but to simplify the notation we simply
write $\calV(K)^+$, $\calV(K)^-$, and $\calV(K)^0$ instead of
$\calV^\bu(K)^+$, $\calV^\bu(K)^-$, and $\calV^\bu(K)^0$ since the
context is unambiguous.

If it happens that $\calV(K)^-$ is empty, \ie all the states are in
bonds, then we should set $\calL(\bu)_i=\bu_i$ for all $i\in\calV(K)$
and loop to the next cell in $\calT$.  The set $\calV(K)^-$ being
empty should be the most common situation because ``good'' numerical
methods producing the ``unlimited'' states $\{\bu_i\}_{i\in\calV}$
should be such the set $\{K\in\calT\st \calV(K)^-\not=\emptyset\}$ is
sparse.

Since ``\texttt{IF}'' statements are detrimental to vectorization, it
is preferable to define vectorizable masks as follows
\begin{equation}
  \ell_i^+\eqq
  \begin{cases}
    1 & \text{$\Psi^i(\bu_i) > 0$}\\
    0 & \text{otherwise}
  \end{cases}
\quad
   \ell_i^-\eqq
  \begin{cases}
    1 & \text{$\Psi^i(\bu_i) < 0$}\\
    0 & \text{otherwise}
  \end{cases}
  \quad
   \ell_i^0\eqq 1 - \ell_i^+ -\ell_i^- \label{masks}
\end{equation}
and proceed with the algorithm regardless whether the sets $\calV(K)^-$ and
$\calV(K)^+$ are not empty.\medskip

\step{cell.lim.local.mass.step2} The main idea of the algorithm
consists of taking from the sates in $V(K)^+$ to give to the states in
$V(K)^-$ with a balanced budget, pretty much as in the same spirit as
the ``repair'' algorithm from
\cite{Kucharik_Shashkov_Wendroff_JCP_2003,Shashkov_Wendroff_JCP_2004}
or the sweeping method from
\cite{Liu_Cheng_Shu_JSC_2017,Griffin_Shu_RMathSci_2026}, one key
  difference being that here we balance $\Real^q$-valued objects
  whereas only scalar-valued objects are balanced therein.  The
  balancing is done by pushing those states that are out of bounds
  towards some ``good'' states that are in bounds and pushing the
  states that are in bounds towards some sates that are out of bonds
  while maintaining local conservation.

There are many ways to define the ``good'' states.  One possibility
simply consists of using a convex combination of sates in
$\calV(K)^+$, say
\begin{multline}
  \bu_i^+ \eqq \sum_{j\in\calV^+(K)} \theta_{j,K} \bu_j \quad \forall
  i\in\calV(K)^+, \\ \text{with} \sum_{j\in\calV^+(K)}
  \theta_{j,K}=1\quad\text{and}\quad \theta_{j,K}\in[0,1]\ \forall
  j\in\calV(K)^+,
  \label{def_of_ujplus}
\end{multline}
where the convexity coefficients $\theta_{j,K}$ are
user-dependent. One could for instance use $\theta_{j,K}\eqq
m_{j,K}/\sum_{j\in\calV(K)^+} m_{j,K}$ or $\theta_{j,K}\eqq
1/\sum_{j\in\calV(K)^+} 1$. Although convexity implies that
$\bu_i^+\in\calB_l$, \ie $\Psi(\bu_i^+)>0$, the choice
\eqref{def_of_ujplus} may be sub-optimal since it does not guaranteed
that $\Psi^j(\bu_i^+)\ge 0$ for all $j\in\calV^{-}$ (recall that $l$
is the index of the limiting functional $\Psi$; see \eqref{Ass_Psi}).

A better possibility consists of using states $\bu_i^+$ that are a
priori known to satisfy the bounds $\Psi^j(\bu_i^+)\ge 0$ for all
$j\in\calV^{-}$ and all $i\in\calV$. In the context of nonlinear
conservation equations one can often construct first-order accurate
solutions $\{\bu_i^+\}_{i\in\calV}$ that are invariant-domain
preserving, \ie $\Psi(\bu_i^+)>0$ for all $i\in\calV$ (and
``non-oscillatory'' in some heuristic sense). Then adopting the
definition \eqref{def_of_Psi_i} for the local functionals $\Psi^j$ and
setting
\begin{equation}
  \Psi^{i,\min}\eqq \min_{j\in\calV(i)}\Psi(\bu_j^+),\quad\text{with}\quad
  \calV(i)\eqq \bigcup_{K'\in\calT(i)}\calV(K'),
\end{equation}
we indeed have $\Psi^j(\bu_i^+)\ge 0$ for all $j\in\calV(K)^{-}$
because $\calV(K)^-\subset \calV(i)$.

We henceforth assume that the ``good'' states $\bu_i^+$ are defined
such that
\begin{equation}
 \Psi^j(\bu_i^+)\ge 0\quad \forall j\in\calV(K)^{-},\label{def_good_ujplus}
  \end{equation}
and we will show later how this can be achieved in particular
cases.\medskip

\step{cell.lim.local.mass.step3} We now estimate the budget that must be
borrowed to limit (or ``repair'') the states in $V(K)^-$.
For all $j\in \calV(K)^-$ we consider the function
\begin{equation}\psi^{j}: [0,1]\ni t \mapsto \Psi^j(\bu_j +
  t(\bu_j^+-\bu_j))\in \Real.
\end{equation}
Since by assumption $\bu_j\in \calB_{l-1}$,
$\bu_j^+\in\calB_{l}\subset\calB_{l-1}$, and $\calB_{l-1}$ is convex,
we have $ \bu_j + t(\bu_j^+-\bu_j)=(1-t)\bu_j + t \bu_j^+\in
\calB_{l-1}$ for all $t\in[0,1]$; hence $\psi^j$ is well defined over
the interval $[0,1]$. Notice that $\psi^j(0)<0$ since $j\in
\calV(K)^-$. Note also that since $\psi^j(1)\ge 0$, the set
$\{t\in(0,1]\st \psi^j(t)=0\}$ is not empty. Then we compute the
  smallest number in the set $\{t\in[0,1]\st \psi^j(t)=0\}$ and call
  this number $\lambda_{j}^-$.  Because $\psi^j$ is continuous, this
  set is closed and has therefore a minimum element.  The set
  $\{t\in(0,1]\st \psi^j(t)=0\}$ reduces to a single point if $\psi^j$
    is concave, and the set is a connected interval when $\psi^j$ is
    just quasi-concave (\ie not concave).  To summarize, we set
\begin{equation}
  \lambda_{j}^- \eqq
  \min \{t\in (0,1] \st  \psi^j(t)=0\} \quad
\forall j\in \calV(K)^-.
\end{equation}
This definition implies that $\bu_j+t(\bu_j^+-\bu_j)\in \calB_l$ for
all $t\in[\lambda_{j}^-,1]$, \ie the local bound
$\Psi^j(\bu_j+t(\bu_j^+-\bu_j))\ge 0$ holds for all
$t\in[\lambda_{j}^-,1]$. In particular this implies that
\begin{equation}
  \Psi^j(\bu_j+\lambda_j^-(\bu_j^+-\bu_j))=0.\label{full_limiting_in_VKminus}
\end{equation}

\begin{remark}
  Observe in passing that if we used definition \eqref{def_of_ujplus}
  for $\bu_j^+$ instead of assuming that the states $\bu_j^+$ satisfy
  \eqref{def_good_ujplus}, then the above construction would only
  ensure that $\Psi(\bu_j+t(\bu_j^+-\bu_j))\ge 0$ instead of
  $\Psi^j(\bu_j+t(\bu_j^+-\bu_j))\ge 0$ for all
  $t\in[0,\lambda_{j}^-]$. This choices entails that borrowing from the
  states that are in bounds with respect to the local functional
  $\Psi^j$ gives limited states that are in bounds only with respect
  to the global functional $\Psi$ instead of the local functional
  $\Psi^j$, which is may be sub-optimal.
\end{remark}\medskip

\step{cell.lim.local.mass.step4} We now estimate the budget that is
available. As our objective is to construct an algorithm that is fast
and we therefore want to avoid solving a nonlinear optimization
problem on every cell, we
propose to move all the states that are in bounds in one single
direction (as in \cite{Zhang_Shu_JCP_2010,Zhang_Shu_ProcA_2011}) which
we define to be the average over $K$ of the direction the states out
of bounds have been moved,
\begin{equation}
  \bDelta_K^- \eqq \frac{\sum_{j\in\calV^-(K)} m_{j,K} \lambda_j^-(\bu_j-\bu_j^+)}{\sum_{j\in\calV^-(K)} m_{j,K}\lambda_j^-}.
  \label{def_of_Ukplus_Ukminus}
  \end{equation}
Recall that $\bDelta_K^-\in\Real^q$. For all $j\in \calV(K)^+$ we
consider the function
\begin{equation}
\psi^j: [0,+\infty)\ni \lambda \mapsto \Psi^j(\bu_j + \lambda \bDelta_K^-)\in \Real.
\end{equation}
Note that $\psi^j(0)$ is well defined over the interval $\{\lambda\in
[0,\infty)\st \Psi^j(\bu_j + \lambda \bDelta_K^-)\ge 0\}$ owing to the
  continuity and quasi-concavity of $\Psi^j$. Moreover, the set
  $\{\lambda\in [0,\infty)\st \psi^j(\lambda)=0\}$ has a largest element. We
    denote this element by $\lambda_{j,K}^+$. To summarize, we set
\begin{equation}
\lambda_{j,K}^+ \eqq
  \max\{\lambda\ge 0 \st  \psi^j(\lambda)=0\}\qquad \forall j\in \calV(K)^+, \label{def_lambda_star_plus}
\end{equation}
Notice that the quasi-concavity of $\Psi^j$ implies that
$\Psi^j(\bu_j+\lambda\bDelta_K^-)> 0$ for all $\lambda\in[0,\lambda_{j,K}^+]$.  This
property means that borrowing from the states that are in bounds keeps
them in bounds with respect to their local functional $\Psi^j$, even
if full limiting is not achieved \ie if $0\le \lambda<\lambda_{j,K}^+$. In summary we have
\begin{equation}
  \Psi^i(\bu_j + t\lambda_{j,K}^+ \bDelta_K^-)\ge 0 \quad \forall t\in[0,1].
  \label{full_limiting_for_VKplus}
  \end{equation}

\step{cell.lim.local.mass.step5} Now we balance the budget. We
define scaling parameters $\alpha_K$ and $\beta_K$ in the interval
$[0,1]$ for this purpose,
\begin{align}
  \alpha_K \eqq \min\Big(1,\tfrac{\sum_{j\in\calV(K)^+}
    m_{j,K}\lambda_{j,K}^+}{\sum_{j\in\calV(K)^-} m_{j,K}}\Big),\label{def_of_alpha_K}\\
   \beta_K \eqq \min\Big(1,\tfrac{\sum_{j\in\calV(K)^-} m_{j,K}}{\sum_{j\in\calV(K)^+}
    m_{j,K}\lambda_{j,K}^+}\Big).\label{def_of_beta_K}
\end{align}
The parameter $\alpha_K$ will be used to scale back the borrowing
budget if the available budget is not sufficient, and the parameter
$\beta_K$ will be used to scale back the available budget if it is
larger than the borrowing budget. These definitions imply the
following budget balancing identity, which will be critical to
establish local conservation, holds:
\begin{equation}
  -\alpha_K \sum_{j\in\calV(K)^-} m_{j,K} + \beta_K \sum_{j\in\calV(K)^+} m_{j,K}\lambda_{j,K}^+ =0.
  \label{balance_indentity}
\end{equation}

\begin{remark}[Bound on $\lambda_{j,K}^+$] Notice that the definition 
  \eqref{def_lambda_star_plus} does not give an a priori upper bound
  on $\lambda_{j,K}^+$. But the definition of $\alpha_K$, which gives
  an upper bound on the amount that must be borrowed to achieve full
  limiting, tells us that a sufficient condition for full limiting to
  be achievable is
  \begin{equation}
    \lambda_{j,K}^+  \eqq
    \max\{t\ge [0,\lambda^+_K] \st  \psi^j(t)\ge 0\}\quad \text{with}\quad
      \lambda^+_K\eqq \tfrac{\sum_{j\in\calV(K)} m_{j,K}}{\min_{j\in\calV(K)}
    m_{j,K}}.
    \end{equation}
\end{remark}
  
\step{cell.lim.local.mass.step5} The states are limited and
provisionally stored for each cell by defining
\begin{equation}
 \tbu_{j,K} \eqq
  \begin{cases}
    \bu_j + \alpha_K \lambda_j^-(\bu_j^+ - \bu_j) & \text{if $j\in\calV(K)^-$}\\
    \bu_j + \beta_K \lambda_{j,K}^+\bDelta_K^- & \text{if $j\in\calV(K)^+$}\\
    \bu_j  & \text{if $j\in\calV(K)^0$}.
    \end{cases}
\end{equation}
Using the masking technique introduced in \eqref{masks} together with
the identity $\ell_j^0+\ell_j^++\ell_j^+=1$, this gives
\begin{equation}
 \tbu_{j,K} \eqq \bu_j + \ell_j^-\alpha_K \lambda_j^-(\bu_j^+ - \bu_j)
 +\ell_j^+ \beta_K \lambda_{j,K}^+(\bu_j^- - \bu_j).\label{def_tujk}
\end{equation}
Notice that the unlimited numerical sates are not updated here. One
must wait for all the cells in $\calT$ to be visited to do the actual
limiting as explained in the next section.

\begin{remark}[Division by $0$] To avoid divisions by zero
  when either $V(K)^-$ or $V(K)^+$ is empty, one should proceed as
  follows to properly define $\alpha_K$ and $\beta_K$. Let $m_\flat =
  \min_{K\in\calT}\min_{j\in\calV(K)} m_{j,K}$. Let $\epsilon$ be an
  arbitrary number in $(0,1)$ (say $\epsilon=\frac12$). Set
\begin{align}
  \alpha_K^\epsilon \eqq \min\Big(1,\tfrac{\sum_{j\in\calV(K)^+}
    m_{j,K}\lambda_{j,K}^+}{\max(\sum_{j\in\calV(K)^-} m_{j,K}, \epsilon m_\flat)}\Big),
  \label{def_of_alpha_K_epsilon}\\
   \beta_K^\epsilon \eqq \min\Big(1,\tfrac{\sum_{j\in\calV(K)^-} m_{j,K}}{\max(\sum_{j\in\calV(K)^+}
    m_{j,K}\lambda_{j,K}^+, \epsilon m_\flat)}\Big),\label{def_of_beta_K_epsilon}
\end{align}
and replace $\alpha_K$ and $\beta_K$ by $\alpha_K^\epsilon$ and
$\beta_K^\epsilon$ in \eqref{def_tujk}. Let us denote
$\tbu_{j,K}^\epsilon$ the new object given by \eqref{def_tujk}. If $V(K)^+\cup V(K)^-\ne
\emptyset$, then $\alpha_K^\epsilon= \alpha_K$ and $\beta_K^\epsilon=
\beta_K$ and nothing is changed, \ie $\tbu_{j,K}^\epsilon=\tbu_{j,K}$.
If $V(K)^+= \emptyset$, then $\alpha_K^\epsilon= 0$ and $\ell_j^+=0$,
and we obtain $\tbu_{j,K} \eqq \bu_j$ as we should.  If $V(K)^-=
\emptyset$, then $\beta_K^\epsilon= 0$ and $\ell_j^-=0$, and we obtain
$\tbu_{j,K} \eqq \bu_j$ as we should.  Hence, using the definition
\eqref{def_tujk} with the masks and the definitions
\eqref{def_of_alpha_K_epsilon}--\eqref{def_of_beta_K_epsilon} gives the
expected values of $\tbu_{j,K}$ for all $\epsilon \in (0,1)$. Note also that
the balance identity \eqref{balance_indentity} also holds true for all
$\epsilon \in (0,1)$.
  \end{remark}

\medskip

\step{cell.lim.local.mass.step6} Once the cells in $\calT$ have all
been visited, the limiting is done by performing averages over the
vertices.  More precisely, the action of the limiting operator $\calL$
is defined by
\begin{align}\label{cell_limiting_with_local_mass}
  \calL(\bu)_i &=
    \frac{\sum_{K \in\calT(i)}  m_{i,K}\tbu_{i,K}}{\sum_{K \in\calT(i)}  m_{i,K}}\quad \forall i\in\calV.
\end{align}

\subsection{Properties of the Jacobi algorithm}

We now summarize the properties of the
algorithm defined in Steps~\ref{cell.lim.local.mass.step1}--\ref{cell.lim.local.mass.step6}.
with the definition \eqref{def_of_alpha_K_epsilon}-\eqref{def_of_beta_K_epsilon}.
\begin{lemma}[Local limiting]
  \label{Lem:cell.local_general_limiting}
  The limiting mapping $\calL:(\calB_{l-1})^I \to \Real^{I}$ defined
  in
  Steps~\ref{cell.lim.local.mass.step1}--\ref{cell.lim.local.mass.step6}
  has the following properties:

  \textup{(i)} It is conservative: $\sum_{i\in\calV}
  m_i (\calL(\bu))_i = \sum_{i\in\calV} m_i \bu_i$.

  \textup{(ii)}
  Let $i$ in $\calV$ be such that $\Psi^i(\bu_i)\ge 0$. Then
  $\Psi^i(\calL(u)_i))\ge 0$; meaning that if $\bu_i$ is in bounds,
  then $\calL(u)_i$ stays in bounds.

   \textup{(iii)} If all the states are in bounds, then $\calL(\bu)=\bu$;
   \ie the action of $\calL$ is an involution if all the states are in bounds. 

   \textup{(iv)}
 Let $i\in \calV$ be such that $\Psi^i(\bu_i)<0$.
  Assume that the following holds true for $K\in\calT(i)$:
  $\tfrac{\sum_{j\in\calV(K)^+}m_{j,K}
    \lambda_{j,K}^+}{\sum_{j\in\calV(K)^-}m_{j,K}}\ge 1$.
  Then the limited state $\calL(\bu)_i$ is in bounds, \ie $\psi_l^i(\calL(\bu)_i)\ge 0$.
\end{lemma}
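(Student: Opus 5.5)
The plan is to work cell by cell, exploiting that each $\tbu_{j,K}$ is an explicit perturbation of $\bu_j$. For each of (i)--(iv) I would first establish the property at the level of the provisional states $\tbu_{i,K}$, and then pass to $\calL(\bu)_i$ through the averaging \eqref{cell_limiting_with_local_mass}. That average is a convex combination, with weights $m_{i,K}/m_i$, $K\in\calT(i)$, which sum to $1$ by \eqref{def_mik}.

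\emph{Conservation (i).} Multiplying \eqref{cell_limiting_with_local_mass} by $m_i=\sum_{K\in\calT(i)}m_{i,K}$ and applying the exchange of summation \eqref{Fubini} gives
\[
\sum_{i\in\calV} m_i\calL(\bu)_i=\sum_{K\in\calT}\sum_{j\in\calV(K)} m_{j,K}\tbu_{j,K}.
\]
It therefore suffices to show, for each cell $K$, that $\sum_{j\in\calV(K)}m_{j,K}(\tbu_{j,K}-\bu_j)=\bzero$. Inserting \eqref{def_tujk}, the contribution of $\calV(K)^-$ is $-\alpha_K\big(\sum_{j\in\calV(K)^-}m_{j,K}\lambda_j^-\big)\bDelta_K^-$, by the very definition \eqref{def_of_Ukplus_Ukminus} of $\bDelta_K^-$. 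The contribution of $\calV(K)^+$ is $\beta_K\big(\sum_{j\in\calV(K)^+}m_{j,K}\lambda_{j,K}^+\big)\bDelta_K^-$. The two cancel by the budget balance identity \eqref{balance_indentity}.

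This is the step I expect to be delicate. For the cancellation to be exact, the ``borrowing budget'' in \eqref{def_of_alpha_K}, \eqref{def_of_beta_K} and \eqref{balance_indentity} must be $\sum_{j\in\calV(K)^-}m_{j,K}\lambda_j^-$ rather than $\sum_{j\in\calV(K)^-}m_{j,K}$. I would therefore read (or restate) these definitions with that weight; the $\epsilon$-regularized versions \eqref{def_of_alpha_K_epsilon}--\eqref{def_of_beta_K_epsilon} need the same reading. I would also check the degenerate cases, where $\calV(K)^+$ or $\calV(K)^-$ is empty. There the remark on division by zero gives $\tbu_{j,K}=\bu_j$, so the cell contributes nothing.

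\emph{Preservation (ii) and (iii).} Let $\Psi^i(\bu_i)\ge0$ and fix $K\in\calT(i)$. If $i\in\calV(K)^0$, then $\tbu_{i,K}=\bu_i$, which is in bounds. If $i\in\calV(K)^+$, then $\tbu_{i,K}=\bu_i+\beta_K\lambda_{i,K}^+\bDelta_K^-$ with $\beta_K\in[0,1]$, and \eqref{full_limiting_for_VKplus} with $t=\beta_K$ gives $\Psi^i(\tbu_{i,K})\ge0$. The functional $\Psi^i$ does not depend on $K$ and is quasiconcave; this is clear for the shifted form \eqref{def_of_Psi_i}, which I would assume. Hence its upper $0$-level set is convex, and the convex combination $\calL(\bu)_i$ stays in it.

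For (iii), if every state is in bounds, then $\calV(K)^-=\emptyset$ for every $K$. Consequently $\beta_K^\epsilon=0$ and $\ell_j^-=0$, so $\tbu_{j,K}=\bu_j$ for all $j,K$, and averaging returns $\bu_i$.

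\emph{Full limiting (iv).} Here I would read the hypothesis as holding for every $K\in\calT(i)$. Then $\alpha_K=1$ for all such $K$, and $\tbu_{i,K}=\bu_i+\lambda_i^-(\bu_i^+-\bu_i)$. This state does not depend on $K$, because $\lambda_i^-$ and $\bu_i^+$ are attached to the vertex $i$ (assuming the good state $\bu_i^+$ is chosen independently of the cell, as in the construction via invariant-domain-preserving low-order states). The average then equals this common state, and \eqref{full_limiting_in_VKminus} gives $\Psi^i(\calL(\bu)_i)=0\ge0$.

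If the hypothesis held only for some of the cells, this argument would fail. The averaged state would mix fully limited and partially limited values, and quasiconcavity alone would not force the bound. This is why I would insist on the ``for all $K\in\calT(i)$'' reading.
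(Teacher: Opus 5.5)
Your argument is correct and has the same structure as the paper's proof. It uses cell-wise cancellation plus \eqref{Fubini} for (i), convexity of the upper $0$-level set of $\Psi^i$ under the average \eqref{cell_limiting_with_local_mass} for (ii) and (iv), and $\beta_K^\epsilon=0$ for (iii).

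\textbf{Where you depart from the text, you are right and the written proof is not.} With $\bDelta_K^-$ normalized as in \eqref{def_of_Ukplus_Ukminus}, the $\calV(K)^-$ part of the defect is $-\alpha_K\bigl(\sum_{j\in\calV(K)^-}m_{j,K}\lambda_j^-\bigr)\bDelta_K^-$. The last equality in the paper's proof of (i) silently drops the factor $\lambda_j^-$. Consequently, \eqref{balance_indentity} as stated does not yield $\bdelta_K=\bzero$ in general.
\begin{itemize}
\item Your reweighting of $\alpha_K,\beta_K$ by $\lambda_j^-$ is consistent with the global algorithm, whose $\beta$ is built from $\sum_{j\in\calV^-}m_j\lambda_j^-$.
\item An equivalent repair is to divide $\bDelta_K^-$ by $\sum_{j\in\calV(K)^-}m_{j,K}$ instead. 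Then $\lambda_{j,K}^+$ scales inversely, and the limited states coincide with yours.
\item Similarly, in (iii) the paper's proof writes $\beta_K^\epsilon=1$ where $0$ is meant; your value is the correct one.
\end{itemize}

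\textbf{Two things to tighten in (iv).}
\begin{itemize}
\item Under your modified $\alpha_K$, you should say why the stated hypothesis still gives $\alpha_K=1$. Since $\lambda_j^-\le 1$,
$\sum_{j\in\calV(K)^-}m_{j,K}\lambda_j^-\le\sum_{j\in\calV(K)^-}m_{j,K}\le\sum_{j\in\calV(K)^+}m_{j,K}\lambda_{j,K}^+$.
Moreover, the last sum is at least $m_{i,K}\ge m_\flat\ge\epsilon m_\flat$ because $i\in\calV(K)^-$, so the $\epsilon$-regularized $\alpha_K^\epsilon$ also equals $1$.
\item You do not need $\bu_i^+$ to be cell-independent. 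Each $\tbu_{i,K}$ satisfies $\Psi^i(\tbu_{i,K})=0$ by \eqref{full_limiting_in_VKminus}, with $\lambda_i^-$ computed in cell $K$. The convexity argument you already used in (ii) then gives $\Psi^i(\calL(\bu)_i)\ge 0$. This is how the paper concludes, and it also covers cell-dependent good states, provided \eqref{def_good_ujplus} holds.
\end{itemize}
Like you, the paper's proof tacitly needs the hypothesis on every $K\in\calT(i)$.
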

\begin{proof}
  \textup{(i)} We start by estimating the local conservation
  defect on each cell for the provisional limited states.
  Let $K\in \calT$ and let 
  \[
  \bdelta_K\eqq \sum_{i\in\calV(K)} m_{i,K}
  (\tbu_{i,K} - \bu_i).
  \]
  Then using the definition of $\bDelta_K^-$ and the balance identity
  \eqref{balance_indentity}, we obtain
  \begin{align*}
    \bdelta_K ={} &\!\sum_{j\in\calV(K)^0}\!  m_{j,K}\bu_{j,K}
     +   \!\sum_{j\in\calV(K)^+}\!  m_{j,K}(\bu_j + \beta^\epsilon_K \lambda_{j,K}^+\bDelta_K^-) \\
    &+ \!\!\sum_{j\in\calV(K)^-}\! m_{j,K}(\bu_j+ \alpha^\epsilon_K\lambda_j^-(\bu_j^+ - \bu_j))
    - \sum_{j\in\calV(K)} m_{j,K}\bu_j\\
     ={}&  \beta^\epsilon_K \bDelta_K^- \!\sum_{j\in\calV(K)^+}\! m_{j,K}\lambda_{j,K}^+  +
 \alpha^\epsilon_K \!\sum_{j\in\calV(K)^-}\! m_{j,K}\lambda_j^-(\bu_j^+ - \bu_j)\\
 ={}& \bDelta_K^- \Big(\beta^\epsilon_K \!\sum_{j\in\calV(K)^+}\! m_{j,K}\lambda_{j,K}^+
 - \alpha^\epsilon_K \!\sum_{j\in\calV(K)^-}\! m_{j,K}\Big)
    =\bzero.
  \end{align*}
  This argument shows that $\sum_{i\in\calV(K)} m_{i,K}(\tbu_{i,K} -
  \bu_i)=\bzero$. Then, the total conservation defect $\bdelta$
  is computed as follows:
  \begin{align*}
   \bdelta\eqq \sum_{i\in\calV} m_i (\calL(\bu)_i-\bu_i)
    & = \sum_{i\in\calV} \frac{m_i}{\sum_{K\in\calT(i)} m_{i,K}}
    \sum_{K\in\calT(i)} m_{i,K} (\tbu_{i,K}-\bu_i).
  \end{align*}
  Using \eqref{def_mik}, \ie
  $m_i = \sum_{K\in\calT(i)} m_{i,K}$, and \eqref{Fubini} we infer that
\begin{align*}
    \bdelta
    & = \sum_{i\in\calV} \sum_{K\in\calT(i)} m_{i,K} (\tbu_{i,K}-\bu_i)
    =\sum_{K\in\calT} \sum_{i\in\calV(K)} m_{i,K} (\tbu_{i,K}-\bu_i)
    =\bzero.
  \end{align*}
  The assertion is proved.

 \textup{(ii)} Let $j\in\calV$ be such that $\Psi^j(\bu_j)> 0$.  Then
 $j\in\calV(K)^+$ for all $K\in\calT(j)$ and $\tbu_{j,K}= \bu_j +
 \beta^\epsilon_K \lambda_{j,K}^+\bDelta_K^-$.  As $\Psi^j(\bu_j)\ge 0$,
 $\Psi^j(\bu_j + \lambda_{j,K}^+\bDelta_K^-)\ge 0$, and $\Psi^j$ is
 quasi-concave, we have $\Psi^j((\bu_j + t\bDelta_K^-)\ge 0$ for all
 $t\in [0,\lambda_{j,K}^+]$.  Hence $\Psi^j(\tbu_{j,K})\ge 0$ because
 $\beta^\epsilon_K\lambda_{j,K}^+\in [0, \lambda_{j,K}^+]$.  Since $\Psi^j$ is
 quasi-concave and the
 definition~\eqref{cell_limiting_with_local_mass} implies that
 $\calL(u)_j$ is in the convex hull of
 $\{\tbu_{j,K}\}_{K\in\calT(j)}$, we conclude that
  \[
  \Psi^j(\calL(\bu)_j)\ge \min_{K\in\calT(j)} \Psi^j(\tbu_{j,K})\ge 0. 
  \]
  The proof is even simpler if $\Psi^j(\bu_j)= 0$ since in this case
 $j\in\calV(K)^0$ for all $K\in\calT(j)$ and $\tbu_{j,K}=
  \bu_j$.
  
  \textup{(iii)} If all the states in $\calV$ are in bounds, then
  $\Psi^i(\bu_i)\ge 0$ for all $i\in\calV$ and $\calV(K)^-$ is the
  empty set for all $K\in\calT(i)$. Then \eqref{def_of_beta_K} implies
  that $\beta^\epsilon_K=1$ and we conclude that $\tbu_{i,K} = \bu_i$ for all
  $i\in\calV$ and all $K\in\calT(i)$.  Then
  \eqref{cell_limiting_with_local_mass} implies that $\calL(\bu)_i =
  \bu_i$.
  
  \textup{(iv)} Let $i\in \calV$ be such that $\Psi^i(\bu_i)<0$.
  Assume that the following holds true for $K\in\calT(i)$:
  $\tfrac{\sum_{j\in\calV(K)^+}m_{j,K}
    \lambda_{j,K}^+}{\sum_{j\in\calV(K)^-}m_{j,K}}\ge 1$. This essentially
  means that there is enough budget available to limit all the states
  in $V(K)^-$. More precisely, this implies that $\alpha^\epsilon_K=1$ and
  $\tbu_{j,K} = \bu_j + \lambda_j^-(\bu_j^+-\bu_j)$. Hence,
  $\Psi^j(\tbu_{j,K})\ge 0$ for all $j\in \calV(K^-)$ owing to
  \eqref{full_limiting_in_VKminus}. Moreover, using
  \eqref{full_limiting_for_VKplus} and $\beta^\epsilon_K\le 1$, we know that
  the states $\tbu_{j,K} = \bu_j + \beta^\epsilon_K\lambda_{j,K}^+\Delta_K^-$
  remain in bounds. In conclusion we have $\Psi^j(\tbu_{j,K})\ge 0$
  for all $j\in \calV(K)$.  The quasi-concavity of $\Psi_l^i$ and the
  definition \eqref{cell_limiting_with_local_mass} then imply that
  $\Psi^i(\calL(\bu)_i)\ge 0$.
\end{proof}

\section{Global limiting} \label{Sec:Global_limiting}

As the local limiting algorithm described in \S\ref{Sec:Jacobi} is not
guaranteed to converge in a fixed number of iterations independent of
the cardinality of the index set $\calV$, we now propose a global
post-processing technique $\calL\upg:(\calB_{l-1})^I\to \calB_l$ (the
super-index $\upg$stands for ``global'') that proceeds as in
\citep[\S2.3]{Guermond_Wang_JCP_2025}. The objective of this algorithm
is only purely theoretical. Its purpose is simply to guarantee that
$\Psi_l((\calL\upg\bu)_i)\ge 0$ for all $i\in\calV$ while preserving
conservation in the rare event that there exists one state that is
still not in bounds with respect to the global function $\Psi_l$ after
a few iterations of the locally conservative redistribution
limiting algorithm. The bulk of the limiting is actually done by the local
algorithms described in \S\ref{Sec:Jacobi}. A similar idea is
advocated in \cite[\S6]{Loubere_Staley_Wendroff_JCP_2006} for
``repair'' methods.

\subsection{The algorithm}\label{sec:global_limiting_algorithm}
As the algorithm consists of one global limiting that post-processes
all the states in the list $\calV$, we assume to have at hand a set of
scalar $(\alpha_j)_{j\in\calV}$ that a priori indicate how much each
state $\bu_j$ is able to give away if it is well inside $\calB_l$ (or
$\calB$ for short).  For instance, when simulating an hyperbolic
system with compactly supported initial data, it is not reasonable to
post-process states that not in the domain of dependence of the
initial data since they should not change until they enter the domain
of dependence of the data.
The algorithm consists of the following steps.\medskip

\newStep
\step{global.step1} We first partition $\calV$ into three sets $\calV^+$,
$\calV^-$, and $\calV^0$ as follows:%
\begin{subequations}%
  \begin{align}%
  \calV^+ &\eqq \{i\in\calV \st \Psi(\bu_i) > 0\},\\
  \calV^- &\eqq \{i\in\calV \st \Psi(\bu_i) < 0\},\\
  \calV^0 &\eqq \{i\in\calV \st \Psi(\bu_i) = 0\}.
  \end{align}\label{def_of_the_Vs_global}%
\end{subequations}
As in \eqref{masks} we define the masks
\begin{equation}
  \ell_i^+\eqq
  \begin{cases}
    1 & \text{$\Psi(\bu_i) > 0$}\\
    0 & \text{otherwise}
  \end{cases}
\quad
   \ell_i^-\eqq
  \begin{cases}
    1 & \text{$\Psi(\bu_i) < 0$}\\
    0 & \text{otherwise}
  \end{cases}
  \quad
   \ell_i^0\eqq 1 - \ell_i^+ -\ell_i^- \label{global_masks}
\end{equation}

\step{global.step2} Arguing as in Step~\ref{cell.lim.local.mass.step3}
of the local limiting algorithm, we define the real number
$\lambda_j^- \in (0,1]$ for all $j\in \calV^-$ so that
\begin{align}
&\lambda_{j}^-\eqq\max \{t\in [0,1] \st  \Psi(\bu_j + t(\bu_j^+-\bu_j))=0\}.
\end{align}

\step{global.step3} Next we
define a direction that is colinear to the average direction the states
out of bounds have been moved.
We will use this direction to move all the
states that are in bounds to define the global budget that is available,
\begin{equation}
  \bDelta^- \eqq
  \frac{\sum_{j\in \calV^-} m_j \lambda_j^-(\bu_j-\bu_j^+)}{\sum_{j\in\calV^-}
    m_j\lambda_j^-}.
\end{equation}
Then we proceed as in Step~\ref{cell.lim.local.mass.step4} of the
local limiting algorithm and define $\lambda_j^+$ by
\begin{equation}
  \lambda_j^+\eqq \max\{\lambda\st \Psi(\bu_j + \lambda\alpha_j \bDelta^-)=0\}
  \quad \forall j\in\calV^+.
\end{equation}
Notice that the quasi-concavity of $\Psi$ implies that
\begin{equation}
  \Psi(\bu_j + t \lambda_j^+\alpha_j \bDelta^-)\ge 0 \quad \forall t\in [0,1].
  \label{lambda_plus_global_property}
  \end{equation} 

\step{global.step4} We now balance the global budget by defining a
global parameter:
\begin{equation}
\beta\eqq \frac{\sum_{j\in \calV^-} m_j \lambda_j^-}{\sum_{j\in \calV^+}
 m_j  \lambda_j^+\alpha_j}.
\end{equation}

\step{global.step5}
We finally define the global limiting operation
$\calL\upg$:
\begin{equation}
  \calL\upg(\bu)_j \eqq
  \begin{cases}
    \bu_j + \lambda_j^-(\bu_j^+ - \bu_j) & \text{if $j\in\calV^-$}\\
    \bu_j + \beta \lambda^+\alpha_j\bDelta & \text{if $j\in\calV+$}\\
    \bu_j  & \text{if $j\in\calV^0$}.
    \end{cases}\label{global_def_global_limiting}
\end{equation}
Using the masks defined in  \eqref{global_masks} this gives
\begin{equation}
  \calL\upg(\bu)_j  = \bu_j+\ell_j^+\beta \lambda_j^+\alpha_j\bDelta
  +\ell^+\lambda_j^-(\bu_j^+ - \bu_j).
\end{equation}

\subsection{Properties of the global limiting algorithm}

We summarize here the properties of the global limiting
algorithm~\ref{global.step1}--\ref{global.step5} defined above.

\begin{lemma}[Global limiting] \label{Lem:cell.global_general_limiting}
  The limiting mapping $\calL\upg: (\calB_{l-1})^I \to \Real^I$
  defined in Steps~\ref{global.step1}--\ref{global.step5} in
  \S\ref{sec:global_limiting_algorithm} has the following properties:

  \textup{(i)} It is conservative: $\sum_{i\in\calV} m_i (\calL\upg(\bu))_i=
  \sum_{i\in\calV} m_i\bu_i$.

  \textup{(ii)} It maps $(\calB_{l-1})^I$ to $(\calB_l)^I$ if
  $\sum_{j\in \calV^-} m_j \lambda_j^-\le \sum_{j\in \calV^+}
 m_j  \lambda_j^+\alpha_j$.

  \textup{(iii)} It is an involution: $\bu\in(\calB_l)^I$ implies that
  $\calL\upg(\bu)=\bu$.

\end{lemma}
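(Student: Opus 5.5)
The proof mirrors that of Lemma~\ref{Lem:cell.local_general_limiting}, with the cell-wise balance replaced by the single global parameter $\beta$. Throughout we read \eqref{global_def_global_limiting} with the evident corrections: $\bDelta=\bDelta^-$, $\lambda^+=\lambda_j^+$, and the last mask $\ell_j^-$. We also read the min/max in the definitions of $\lambda_j^-$ and $\lambda_j^+$ as in Steps~\ref{cell.lim.local.mass.step3}--\ref{cell.lim.local.mass.step4}. Finally, we understand $\beta$ to be capped by $1$, as in \eqref{def_of_beta_K}, or else computed under the hypothesis of (ii). In the uncapped case, (i) is checked with the raw $\beta$, and (ii) uses that the hypothesis gives $\beta\le 1$.

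\textbf{(i) Conservation.} Write
\[
\bdelta\eqq\sum_{j\in\calV} m_j(\calL\upg(\bu)_j-\bu_j)
=\beta\,\bDelta^-\sum_{j\in\calV^+} m_j\lambda_j^+\alpha_j+\sum_{j\in\calV^-} m_j\lambda_j^-(\bu_j^+-\bu_j).
\]
By the definition of $\bDelta^-$, the second sum equals $-\bDelta^-\sum_{j\in\calV^-}m_j\lambda_j^-$. Hence
\[
\bdelta=\bDelta^-\Big(\beta\sum_{\calV^+}m_j\lambda_j^+\alpha_j-\sum_{\calV^-}m_j\lambda_j^-\Big),
\]
which vanishes by the definition of $\beta$. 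This is exactly the global analogue of \eqref{balance_indentity}. With a capped $\beta$, one would instead scale the $\calV^-$ corrections by the corresponding $\alpha$-type factor, as in the local algorithm, and the same cancellation holds.

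\textbf{(ii) Admissibility.} Under the assumed inequality we have $\beta\le1$; this inequality plays the role of the full-budget hypothesis of Lemma~\ref{Lem:cell.local_general_limiting}(iv). We split into three cases.
\begin{itemize}
\item For $j\in\calV^-$, the state $\bu_j+\lambda_j^-(\bu_j^+-\bu_j)$ satisfies $\Psi=0$ by the choice of $\lambda_j^-$, so it lies in $\calB_l$. It lies in $\calB_{l-1}$ by convexity, since $\bu_j\in\calB_{l-1}$ and $\bu_j^+\in\calB_l\subset\calB_{l-1}$.
\item For $j\in\calV^+$, \eqref{lambda_plus_global_property} with $t=\beta\in[0,1]$ gives $\Psi(\calL\upg(\bu)_j)\ge0$.
\item For $j\in\calV^0$, the state is unchanged and $\Psi(\bu_j)=0$.
\end{itemize}
Thus every output state lies in $\calB_l=\{\bv\in\calB_{l-1}\st\Psi(\bv)\ge0\}$.

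\textbf{(iii) Involution.} If $\bu\in(\calB_l)^I$, then $\calV^-=\emptyset$. Every $\calV^-$-correction is then absent, and the numerator of $\beta$ vanishes, so $\beta=0$; with the $\epsilon$-regularization of the local section, the denominator is nonzero. Hence $\calL\upg(\bu)_j=\bu_j$ for every $j$. Here $\bDelta^-$ is an empty quotient, and we use the same $\epsilon$-regularized convention, under which the term $\beta\lambda_j^+\alpha_j\bDelta^-$ vanishes regardless.

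\textbf{Main obstacle.} The only delicate point is not the algebra but the consistency of the definitions. One must ensure $\beta\le1$ so that the $\calV^+$ states remain admissible, and one must handle the degenerate denominators in $\bDelta^-$ and $\beta$. I would resolve the latter by invoking the same $\epsilon$-regularization used in the remark on division by zero in \S\ref{Sec:Jacobi}.
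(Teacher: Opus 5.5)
Your proof is correct and is essentially the paper's argument step for step: conservation follows from the definitions of $\bDelta^-$ and the raw ratio $\beta$; admissibility follows from $\beta\le 1$, \eqref{lambda_plus_global_property}, and the defining property of $\lambda_j^-$; and the identity property follows from $\calV^-=\emptyset\Rightarrow\beta=0$. Your corrections of the typos in \eqref{global_def_global_limiting} are the right reading, and the capped-$\beta$ digression is not needed because the paper uses the uncapped ratio, under which your computation in (i) is exactly the paper's.
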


\begin{proof}
 \textup{(i)}  The conservation defect is given by 
  \begin{align*}
 \bdelta \eqq{}& \sum_{j\in \calV^0} m_j \bu_j  + \sum_{j\in \calV^-}
 m_j\left(\bu_j + \lambda_j^-(\bu_j^+-\bu_j)\right) \\
  & + \sum_{j\in\calV^+} m_j (\bu_j +\beta \lambda_j^+ \alpha_j\bDelta^-)
 -\sum_{j\in\calV} m_j \bu_j \\
 ={}&  \sum_{j\in \calV^-} m_j \lambda_j^-(\bu_j^+-\bu_j)
 + \sum_{j\in\calV^+} m_j \lambda_j^+ \alpha_j \beta \bDelta^-\\
={}&  \big(\beta \sum_{j\in \calV^+}
 m_j \lambda_j^+ \alpha_j - \sum_{j\in \calV^-} m_j \lambda_j^-\big)\bDelta^-.
  \end{align*}
  We conclude that $\bdelta=\bzero$ by using the definition of
  $\bdelta^-$, \ie the algorithm is conservative.

 \textup{(ii)} If
  $\sum_{j\in \calV^-} m_j \lambda_j^-\le \sum_{j\in \calV^+}
 m_j  \lambda_j^+\alpha_j$, then $\beta\le 1$ and \eqref{lambda_plus_global_property}
 implies that $\Psi(\calL\upg(u)_j)\ge 0$, \ie  $\calL\upg(u)_j\in \calB_l$
 for all $j\in\calV^+$. By definition $\calL\upg(u)_j\in \calB_l$ for all
 $j\in \calV^0\cup \calV^-$. Hence $\calL\upg(u)_j\in \calB_l$ for all
 $j\in\calV$.

 \textup{(iii)} If all the states are in bounds, then $\calV^-$ is the
 empty set and $\beta=0$; hence, the states in $\calV^+$ are
 untouched. By definition, the states in $\calV^0$ are also untouched.
 In conclusion $\calL\upg$ is indeed an involution.
\end{proof}

\begin{remark} Global limiting is effective only if
  \begin{equation}
    \sum_{j\in \calV^-} m_j \lambda_j^-\le \sum_{j\in \calV^+} m_j
    \lambda_j^+\alpha_j.
  \end{equation}
  This inequality is likely to hold if the cardinality of $\calV^-$ is
  significantly smaller than that of $\calV^-$ and the window
  $\alpha_j$ allowed for the states that are in bounds to move is
  large enough. The obvious choice is $\alpha_j=1$, but for
  time-dependent problem we could take
\begin{equation}
  \alpha_j \eqq \frac{|\Psi(\bu_j^{n+1}) - \Psi(\bu_j^{n})|}{\alpha}, \qquad
  \alpha \eqq \frac{\sum_{j\in \calV}m_j |\Psi(\bu_j^{n+1}) - \Psi(\bu_j^{n})|}{
    \sum_{j\in \calV}m_j}.
  \end{equation}
\end{remark}

\section{Examples of discretization settings}
\label{Sec:Examples_of_discretzation_settings}

We illustrate the setting described in \S\ref{Sec:Jacobi} and show
that the structures described in \S\ref{Sec:Enumeration} can be
realized using nodal and modal finite elements, and Fourier expansions
to solve nonlinear conservation equations.

\subsection{Generic finite elements} \label{Sec:generic_degree_FE}
Let $\famTh$ be a shape-regular sequence of meshes of some polygonal
domain $\Dom$ in $\Real^d$.  Let $\calT_h$ be one mesh in this
sequence and let us drop the index $h$ from now on.  We are going to
abuse the notation and identify $\calT$ with the index set enumerating
the cells in $\calT$. We also abusively identify cells with their
indices, \ie we are going use the abuse of notation
$\calT=\bigcup_{K\in\calT} K$. Let $P(\calT)$ be a scalar-valued
finite elements space generated on the mesh $\calT$ (continuous or
discontinuous) and using polynomials of degree $k$.  Let
$\{\varphi_i\}_{i\in\calV}$ be the global shape functions of
$P(\calT)$. Notice that here $\calV$ is the index set enumerating all
the global shape functions. For all $K$ in $\calT$, we define the set
$\calV(K)$ to be the collection of the indices $i$ of those shape
functions $\varphi_i$ that are such $\varphi_{i|K} \not \equiv 0$. The
cardinality of $\calV(K)$ only depends on the polynomial degree of the
approximation $k$, \ie $\card(\calV(K))=\calO(k^d)$. Then, for all
$i\in\calV$, the set $\calT(i)$ is the collection all those cells
$K\in\calT$ so that $\varphi_{i|K}\not \equiv 0$.  The cardinality of
$\calT(i)$ is equal to $1$ if the finite element space is
discontinuous. The cardinality of $\calT(i)$ is bounded from above by
a number that only depends on the shape-regularity of the mesh
sequence if the finite element space $P(\calT)$ is $C^0$-conforming
(\ie the cardinality of $\calT(i)$ does not depend on the polynomial
degree $k$ and the mesh size $h$). To conclude, here we have
  \begin{align}
    \calV(K) &=\{K\in\calT\st \varphi_{i|K} \not\equiv 0\}\quad\forall K\in\calT,\\
      \calT(i) &=\{i\in\calV\st \varphi_{i|K} \not\equiv 0\}\quad \forall i\in\calV.
  \end{align}%
\begin{figure}[h]\vspace{-\baselineskip}%
  \centering \scalebox{0.23}{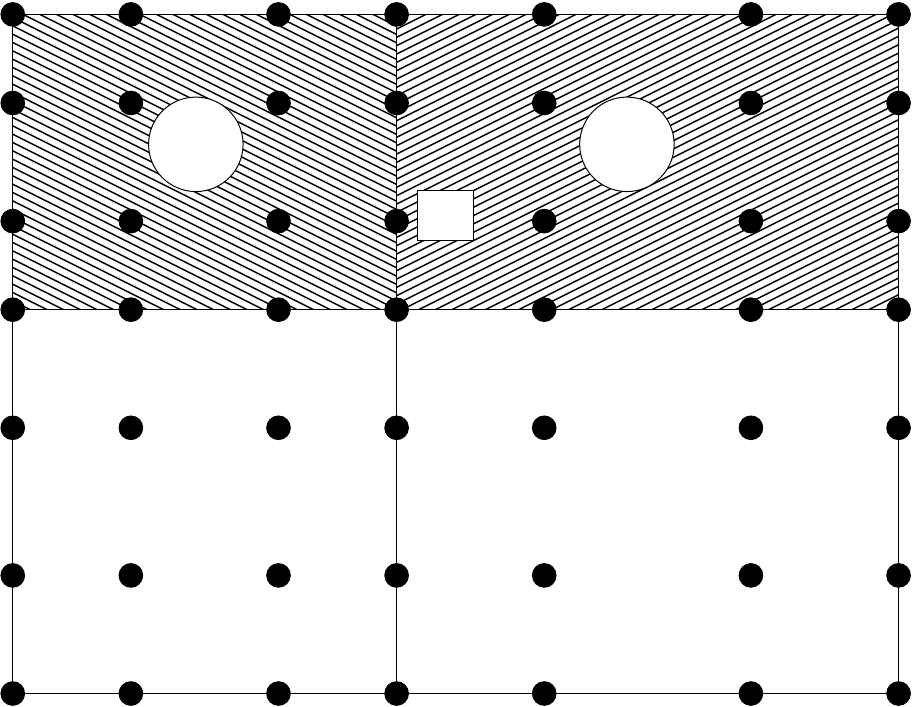}
    \hfil\scalebox{0.18}{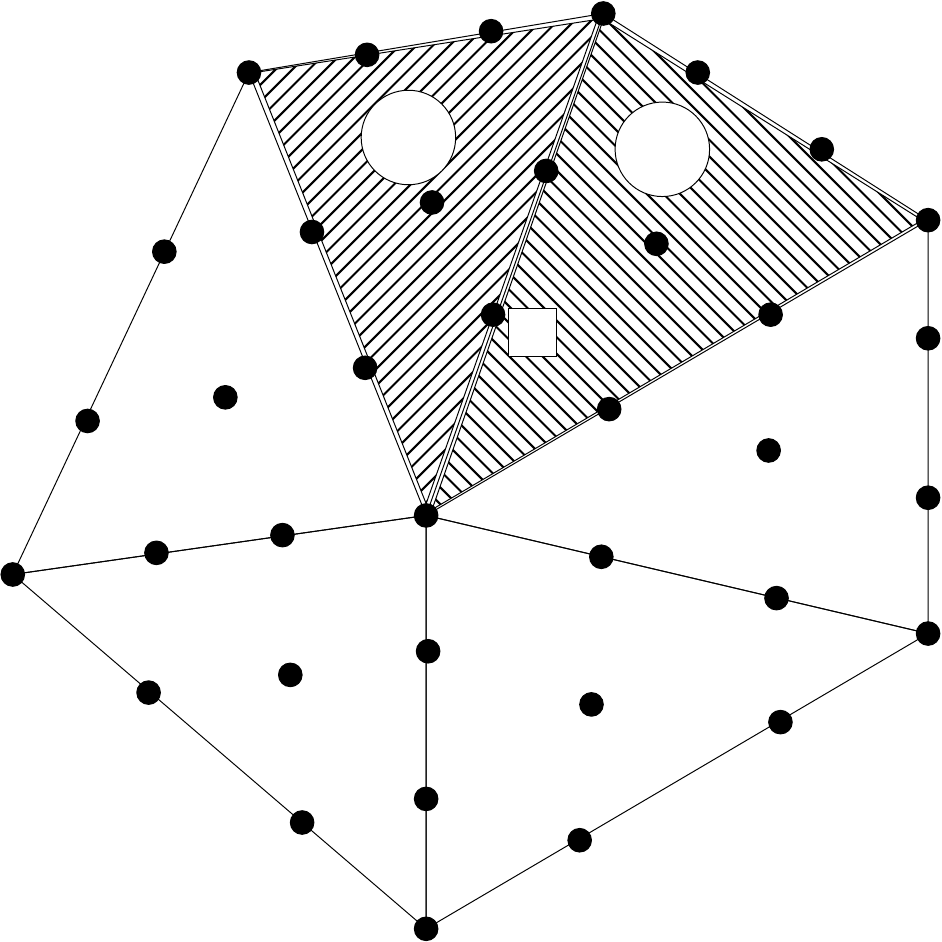}
     \hfil\scalebox{0.18}{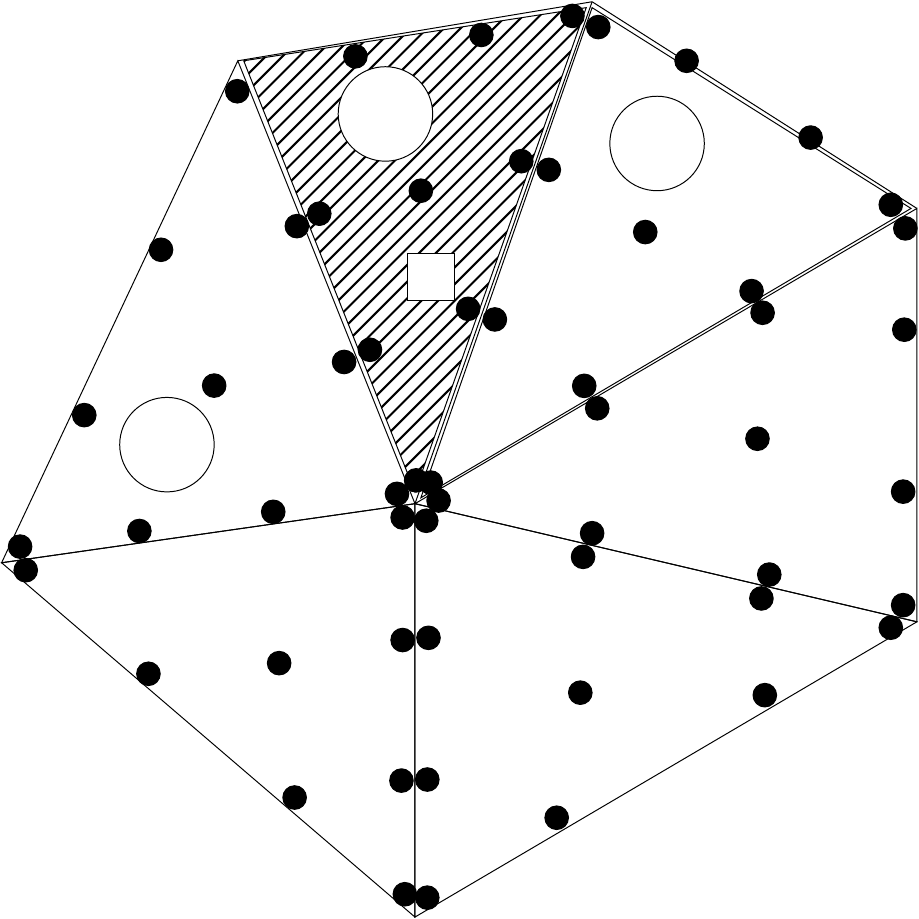}
\caption{Index set $\calT(i)$ (shaded cells) for
  cubic two-dimensional finite elements. Left: Continuous 
  $\polQ_{3,2}$ elements on quadrangles, $\calT(i)=\{K,K'\}$.  Center:
  Continuous $\polP_{3,2}$ elements on triangles,
  $\calT(i)=\{K,K'\}$. Right: Discontinuous $\polP_{3,2}$
  elements on triangles, $\calT(i)=\{K\}$.  Black dots represent
  Lagrange nodes for Lagrange
  polynomials, or domain points for Bernstein
  polynomials, or quadrature points (Gauss-Lobatto) for modal  shape
  functions.}
\label{Fig:Notation_calTi_high}
\end{figure}

Let $\polP_{k,d}$ be the space of the $d$-variate polynomials of
total degree at most $k$ and $\polQ_{k,d}$ be the space of the
$d$-variate polynomials of partial degree at most $k$. We illustrate
in Figure~\ref{Fig:Notation_calTi_high} the definition of $\calT(i)$ for
$\polP_{3,2}$ continuous finite elements in the left panel and
discontinuous finite elements in the right panel.

Let $i\in\calV$. Assuming that either the lumped mass $m_i$ is
positive or $\varphi_i\ge 0$, two possible ways to define $m_i^K$ so
that the generic property \eqref{def_mik} holds are
\begin{equation}
    m_i^K \eqq m_i \frac{\mes{K}}{\sum_{K'\in\calT(i)}\mes{K'}},\quad
    \text{or}\quad
    m_i^K \eqq m_i \frac{\int_K \varphi_i(\bx)\diff
      x}{\int_\Dom \varphi_i(\bx)\diff x}.
\end{equation}

In the context of the approximation of some nonlinear conservation equation with states taking
values  in $\Real^q$, one is then lead to consider the $\Real^q$-valued finite element space
\begin{equation}
  \bP(\calT) = P(\calT)^q.
\end{equation}
The functions in this space have the following representation $\bu_h =
\sum_{i\in\calV} \bsfu_i\varphi_i$.

Assume now that this setting is
used to solve some nonlinear conservation equation with invariant
domain $\calA$. Assuming that $\bu_h$ is the result of some
computation, the key problem is now to make sure that $\bu_h$ is in
bounds. To better formalize this problem we assume that what is at
stake here is to make sure that the states
$(\bu_i\up{Lag})_{i\in\calV}$ are all in $\calA$ where
$(\bu_i\up{Lag})_{i\in\calV}$ are the coefficients of $\bu_h$ in the
Lagrange basis of $\bP(\calT)$.  Hence one must start by making a
change of basis $(\bu_i)_{i\in\calV}\to (\bu_i\up{Lag})_{i\in\calV}$
before applying the conservative redistribution limiting algorithm.

\subsection{Hierarchical decomposition}
\label{Sec:high-degree_FE}
One problem with the finite element setting described in
\S\ref{Sec:generic_degree_FE} is that the index set $\calV(K)$ grows like
$k^d$ where $k$ is the polynomial degree of the approximation and $d$
the space dimension; moreover, localization is somewhat lost when $k$
is very large (think of the limit case where the mesh is composed of
only one cell).  To handle this situation, we
introduce a standard hierarchical decomposition of the space
approximation as done for instance in
\cite{Abgrall_Viville_Beaugendre_Dobrzynski_2017,Abgrall_Bacigaluppi_Tokareva_2019},
\cite{Pazner_2021,Kuzmin_Quezada_2020},
\cite{Guermond_Nazarov_Popov_CMAME_2024},
\cite{Vilar_Abgrall_SISC_2024}.

Instead of using the symbol $\calT$ for the mesh, we now use
$\calT\upH$.  Here, the super-index $\upH$ reminds us that this mesh
is used to construct some high-order approximation with some
polynomial degree $k>1$.  We assume that each cell in $\calT\upH$
can be subdivided as in
\citep{Abgrall_Viville_Beaugendre_Dobrzynski_2017,Abgrall_Bacigaluppi_Tokareva_2019},
\citep{Pazner_2021,Kuzmin_Quezada_2020},
\citep{Guermond_Nazarov_Popov_CMAME_2024},
\citep{Vilar_Abgrall_SISC_2024}. For instance, we assume that for each
cell $K$ in $\calT\upH$, the local shape functions are Lagrange
elements based on Lagrange nodes $\{\bz_{i,K}\}_{i\in\calV\upH(K)}$,
or the Bernstein basis based on domain points
$\{\bz_{i,K}\}_{i\in\calV\upH(K)}$, or a modal basis based on some
quadrature points $\{\bz_{i,K}\}_{i\in\calV\upH(K)}$ (say Fekete
points for simplices or Gauss-Lobatto points for cuboids, see \eg
\cite[Chap.2\&3]{Karniadakis_Sherwin_1999},
\cite{Taylor_Wingate_Vincent_2000},
\cite[Chap.3]{Hesthaven_Warburton_2008}). We then assume that $K$ can
be subdivided by joining the points
$\{\bz_{i,K}\}_{i\in\calV\upH(K)}$.  The subdivision must be such that
each subcell thus created is a legitimate geometric object (\ie
simplex or cuboid) that can be used to construct $\polP_{1,d}$ or
$\polQ_{1,d}$ finite elements.  We call $\calT\upL(K)$ the collection
of the subcells of $K$ thus created. We denote by $\calT\upL$ the
list (and the mesh composed) of all the subcells created by
subdividing all the cells in $\calT\upH$.  We denote by
$P(\calT\upL)$ the scalar valued finite element space based on the
subdivided mesh $\calT\upL$ using piece-wise $\polP_{1,d}$ or
$\polQ_{1,d}$ approximations on $K$, depending whether $K$ is a
simplex or cuboid. The above construction is
illustrated in Figure~\ref{Fig:Notation_calTi}.
  
\begin{figure}[h]
\centering \scalebox{0.23}{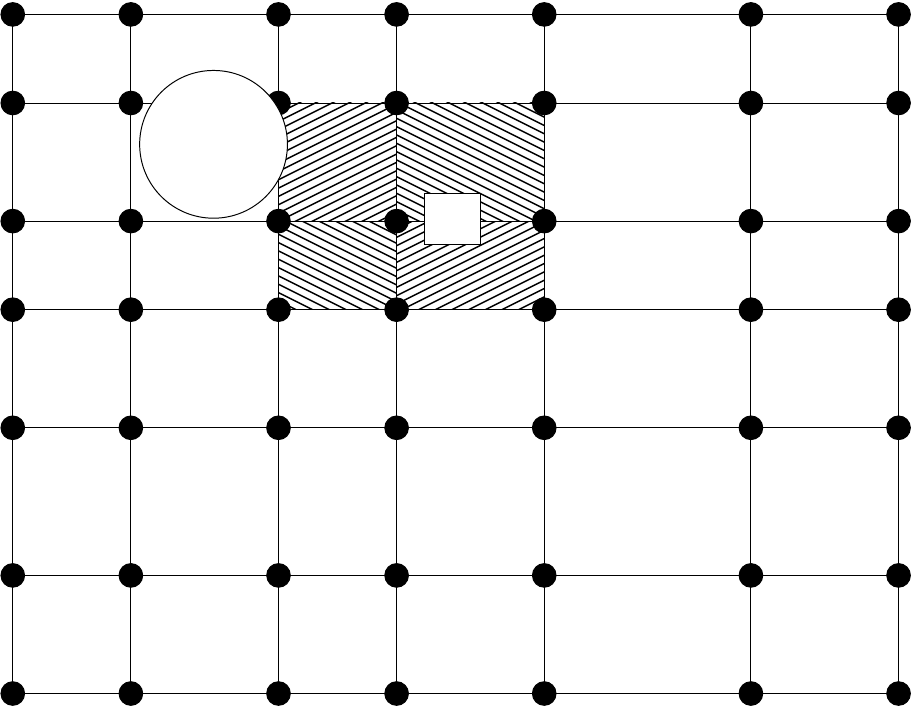}
   \hfil\scalebox{0.18}{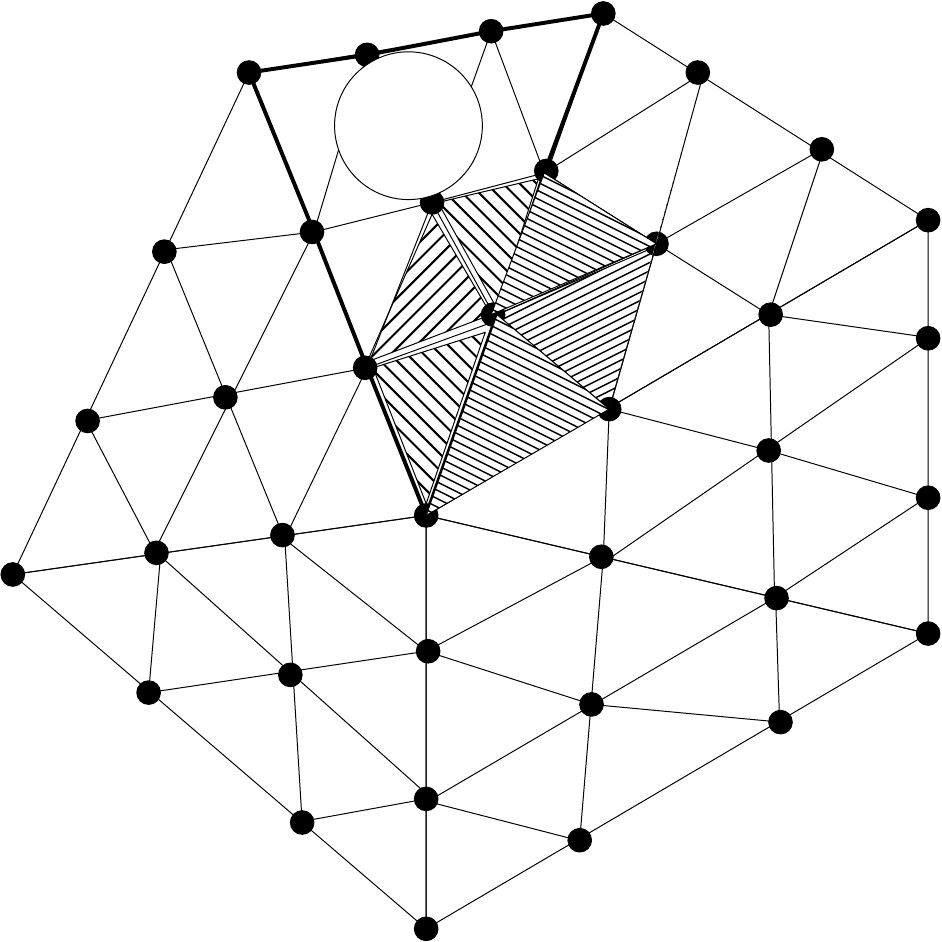}
   \hfil\scalebox{0.18}{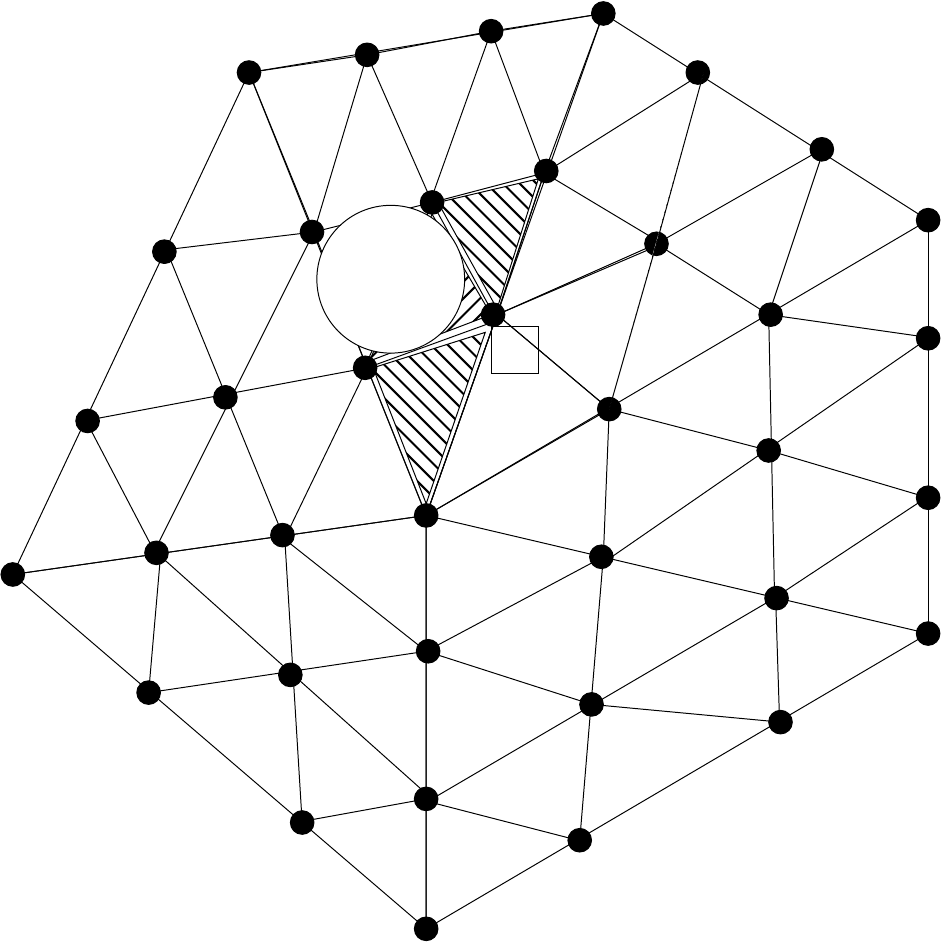}
\caption{Index set $\calT\upL(i)$ (shaded cells) for cubic
  two-dimensional finite elements. Left: Continuous $\polQ_{3,2}$
  finite elements. Center: Continuous $\polP_{3,2}$ 
  elements. Right: Discontinuous $\polP_{3,2}$  elements.}
\label{Fig:Notation_calTi}
\end{figure}

If $P(\calT\upH)$ is composed of continuous elements, we define the
space $P(\calT\upL)$ to be composed of continuous $\polP_1$ or
$\polQ_1$ elements.
Denoting by $\{\bz_{i}\}_{i\in\calV}\eqq \cup_{K\in\calT\up{H},i\in\calV\up{H}(K)}\{\bz_{i,K}\}$,
the global shape functions of $P(\calT\upL)$ are the Lagrange piecewise linear
functions
associated with the nodes $\{\bz_{i}\}_{i\in\calV}$.
Notice that $P(\calT\upH)$ and $P(\calT\upL)$ have
the same dimension.
If $P(\calT\upH)$ is composed of discontinuous elements, we define
$P(\calT\upL)$ to be composed of the functions that are discontinuous across
the interfaces of the macro-cells of $\calT\up{H}$, but  continuous and piecewise
$\polP_1$ or $\polQ_1$ over each macro-cell of $\calT\up{H}$. This
construction guarantees that $P(\calT\upH)$ and $P(\calT\upL)$ have
the same dimension; hence the dofs in each space can be enumerated
with the same index set $\calV$.
In conclusion whether the functions in $P(\calT\upH)$ are continuous
or not, $P(\calT\upL)$ is a Lagrange space with shape functions
associated with the nodes $(\bz_i)_{i\in\calV}$, and the spaces
 $P(\calT\upH)$ and $P(\calT\upL)$ have the same dimension.

We now propose to use the above hierarchical construction to do the
limiting of the states $(\bu_i\up{Lag})_{i\in\calV}$ mentioned at the
end of \S\ref{Sec:generic_degree_FE}.  More precisely we propose to
apply the conservative redistribution limiting from
\S\ref{Sec:Jacobi}--\ref{Sec:Global_limiting} with the pair of index
sets $(\calV,\calT\up{L})$ and the masses $(m_i)_{i\in\calV}$ and
$(m_{i,K})_{i\in\calV, K\in\calT\up{L}(i)}$. The key advantage of this
setting is localization.

\subsection{Fourier approximation}\label{Sec:Fourier}

For Fourier and spectral approximations, the shape functions
$(\varphi_i)_{i\in\calV}$ are global. To formalize the limiting
problem for the global approximation $\bu_h\eqq
\sum_{i\in\calV}\bsfu_i\varphi_i$, we must assume that there exists a
set of (Lagrange) points $(\bz_i)_{i\in\calV}$ where the states
$(\bu_h(\bz_i))_{i\in\calV}$ must be in the invariant set $\calA$.
This usually involve a fast transform.

For the Fourier approximation over the interval $[0,1]$ with the
trigonometric basis $\{\cos(2\pi n
x)\}_{n\in\intset{0}{N}}\cup\{\sin(2\pi n x)\}_{n\in\intset{1}{N}}$,
the points in questions are
$\{\bz_i=\frac{i}{2N+1}\}_{i\in\intset{0}{2N}}$. Setting
$\calV\eqq\intset{0}{2N}$, the fast Fourier transform realizes the
mapping $(\bu_i)_{i\in\calV}\to (\bu_i\up{Lag}\eqq
\bu_h(\bz_i))_{i\in\calV}$. Solving nonlinear PDEs with such an
approximation invariably requires estimating
$(\bef(\bu_i\up{Lag}))_{i\in\calV}$ for some nonlinear function $\bef$
whose domain is some compact interval $\calA$. Assuming that the exact
solution takes values in $\calA$, it is therefore imperative that the
states $(\bu_i\up{Lag})_{i\in\calV}$ all belong to $\calA$. The
conservative redistribution limiting proposed in the paper can be used
for this purpose. It turns out that in this context
\begin{equation}
  \int_0^1 \bu_h(\bx)\diff x = \sum_{i\in\calV} \frac{1}{2N+1}\bu_i\up{Lag}.
  \end{equation}
Hence the corresponding lumped mass coefficients are $m_i\eqq
\frac{1}{2N+1}$ for all $i\in\calV$. The set $\calT$ is simply
composed of $2N+1$ cells $\{K_i\eqq
[\bz_i,\bz_{i+1}]\}_{i\in\intset{0}{2N}}$ with the convention
$\bz_{2N+1}=0$.  This construction gives $\calV(K_i)=\{i,i+1\}$ and
$\calT(i)=\{K_{i-1},K_{i}\}$ with the convention that $K_{-1}\eqq
K_{2N}$. We denote $\bP(\calT\up{L})$ the continuous finite piecewise
linear finite element space based on the mesh $\calT$ (the superscript
$\up{L}$ reminds us that $\bP(\calT\up{L})$ is a low-order
approximation space. We refer the reader to
\cite[\S3]{Liu_Cheng_Shu_JSC_2017} where these ideas are also exposed.

\subsection{Possible definition of $u_j^+$}

One key problem left is the definition of the IDP states $\bu_j^+$ for
all $j\in\calV$ (see Step~\ref{cell.lim.local.mass.step2} of the local
conservative redistribution limiting algorithm). This point heavily
depends on the problem that is at hand.

\subsubsection{Example 1: steady-state linear transport}
\label{sec:definition_of_uK_plus_steady_transport}

Let us specialize the setting and assume that we solve a linear scalar
transport equation like in \citep{Guermond_Wang_JCP_2025} in the
context of the steady-state neutron transport equation. Then for all
$j\in\calV$ the local bounds $\bu_i^+$ can be obtained by using the
method of characteristics over each elementary cell in $\calT\up{L}$ starting
from the Lagrange located at $\bz_i$; the reader is referred to
\citep{Guermond_Wang_JCP_2025} where this computation is done.

\subsection{Example 2: time-dependent hyperbolic systems}
\label{sec:definition_of_uK_plus}

Let us now assume that we are solving a nonlinear time-dependent
hyperbolic system with some invariant domain $\calA$.  Assume that we
are using some explicit time stepping.  Then numerous ways to compute
states $\bu_i^+$ that are guaranteed to be in $\calA$ for all
$i\in\calV$ (up to a CFL condition) can be found in the literature
(too many to be listed here). One technique we know well and that is
indeed guaranteed to be IDP is explained in
\citep{Guermond_Popov_SINUM_2016}. This method is discretization
agnostic and works well with linear finite elements, continuous and
discontinuous; see \citep[4]{Guermond_Popov_Tomas_CMAME_2019} for the
implementation details involving continuous and discontinuous
elements. This method works particularly well with the piecewise
linear space $\bP(\calT\up{L})$ defined above, whether the
approximation space is a high-order finite element space as in
\S\ref{Sec:high-degree_FE} or the Fourier space as in
\S\ref{Sec:Fourier}.

\section{Numerical illustrations} \label{Sec:numerical illustrations}

We now go briefly over a standard series of benchmark problems to
illustrate the versatility of the proposed conservative redistribution
limiting technique. We have verified on tests not reported here for
brevity that the proposed limiting technique is indeed consistent, and
high-order accuracy is maintained when the exact solution is smooth.

\subsection{Linear transport with continuous finite elements}
\label{Sec:three_body_scalar_transport}

We consider the time-dependent linear transport equation with the
velocity $2\pi(-x_2, x_1)\tr$ in the unit disk $\Dom$ centered
at $\bzero$. The initial conditions are as follows:
\begin{equation} \label{Three_Body_2D}
u_0(\bx) \eqq
\begin{cases}
1&  \mbox{if } r_1(\bx) \le r_0 \text{ and
($|x_1| \ge \frac{1}{20}$ or  $x_2 \ge \frac{7}{10}$)},\\
1- \frac{r_2(\bx)}{r_0}  &  \text{if } r_2(\bx)  \le r_0,\\
\frac14\left(1+\cos\big(\frac{r_3(\bx)}{r_0}\pi\big)\right)&  
\mbox{if } r_3(\bx)  \le r_0,\\
0&  \mbox{otherwise, }
\end{cases}
\end{equation}
with $r_0 \eqq 0.3$, $r_1(\bx)\eqq \|\bx-\bx_1\|_{\ell^2}$,
$r_2(\bx)\eqq \|\bx-\bx_2\|_{\ell^2}$, $r_3(\bx)\eqq
\|\bx-\bx_3\|_{\ell^2}$, with $\bx_1\eqq (0,\frac12)$, $\bx_2\eqq
(-\frac12,0)$, $\bx_3\eqq (0,-\frac12)$.

 \begin{table}[ht]\small \centering
  \begin{tabular}{c|c|c}
    \multicolumn{3}{c}{$\polP_1$}\\\hline
I &  $\delta_1$ & rate \\ \hline
   453 & 3.92E-01  &  -- \\
   1716 & 2.39E-01  & 0.75\\
   6497 & 1.35E-01  & 0.85\\
  25049 & 8.04E-02  & 0.77\\
  98752 & 4.72E-02  & 0.78
  \end{tabular}
  \hfil
  \begin{tabular}{c|c|c}
    \multicolumn{3}{c}{$\polP_2$}\\\hline
    I &  $\delta_1$ & rate \\ \hline
   458 & 3.43E-01  &  -- \\
   1746 & 1.95E-01  & 0.84\\
   6735 & 1.12E-01  & 0.82\\
  25734 & 6.75E-02  & 0.75\\
  99690 & 3.98E-02  & 0.78
  \end{tabular}
  \hfil
  \begin{tabular}{c|c|c}
    \multicolumn{3}{c}{$\polP_3$}\\\hline
    I &  $\delta_1$ & rate \\ \hline
     460 & 4.50E-01  &  -- \\
   1711 & 2.44E-01  & 0.93\\
   6751 & 1.29E-01  & 0.93\\
  27091 & 7.54E-02  & 0.78\\
  99859 & 4.59E-02  & 0.76  
  \end{tabular}
  \caption{Convergence tests for Three body problem. From left to right
    $\mathbb{P}_1$, $\mathbb{P}_2$, $\mathbb{P}_3$. $\text{CFL}=0.25$.}
  \label{Table:Three_body_pb}
 \end{table}

 The approximation is done using continuous $\polP_k$,
 $k\in\{1,2,3\}$, finite elements with SUPG stabilization.  The SUPG
 stabilization coefficient is defined to be equal to
 $0.04/((k+1)(k+d))^{\frac12}$ with $d=2$. The meshes are
 quasi-uniform and composed of $\polP_2$ triangles to match the
 boundary of the domain with third-order accuracy.  We use the ERK3
 scheme with maximal efficiency to step in time (\ie equi-distributed
 time stages; not to be confused with the ERK3 SSP method); see
 \cite[Eq. (4.1)]{ErnGu:21+}.  The CFL number for each ERK3 stage is
 0.25. The locally conservative redistribution limiting algorithm is
 applied at the end of each Runge-kutta stage to enforce the local
 maximum and minimum principle.  The local IDP states $\bu_j^+$ in
 each case are estimated using the low-order update with the IDP
 method described in \citep{Guermond_Popov_SINUM_2016} and the
 hierarchical $\polP_1$ meshes $\calT\up{L}$ described in
 \S\ref{Sec:high-degree_FE}.  As the exact solution is only BV, the
 maximal theoretical rate of convergence in space is $\calO(h^1)$ in
 the $L^1$-norm. We report in Table \eqref{Table:Three_body_pb} the
 observed convergence rates with the $L^1$-norm of the error computed
 at $t=1$.

We show in Figure~\ref{Fig:three_body} a three-dimensional rendering
of the solution after one revolution using $\polP_1$, $\polP_2$, and
$\polP_3$ continuous finite elements with, respectively, $25\,049$,
$25\,734$ and $27\,091$ dofs.

\begin{figure}[htb]
\includegraphics[width=0.32\textwidth]{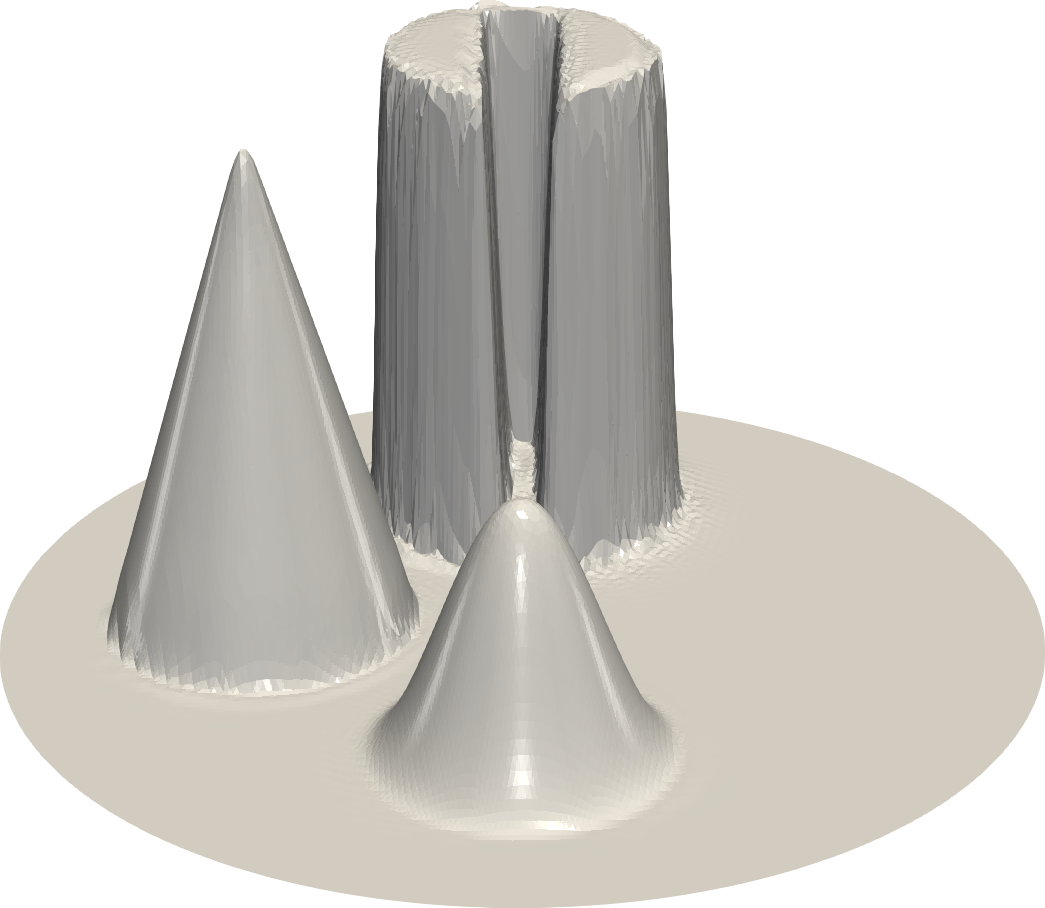}\hfil
\includegraphics[width=0.32\textwidth]{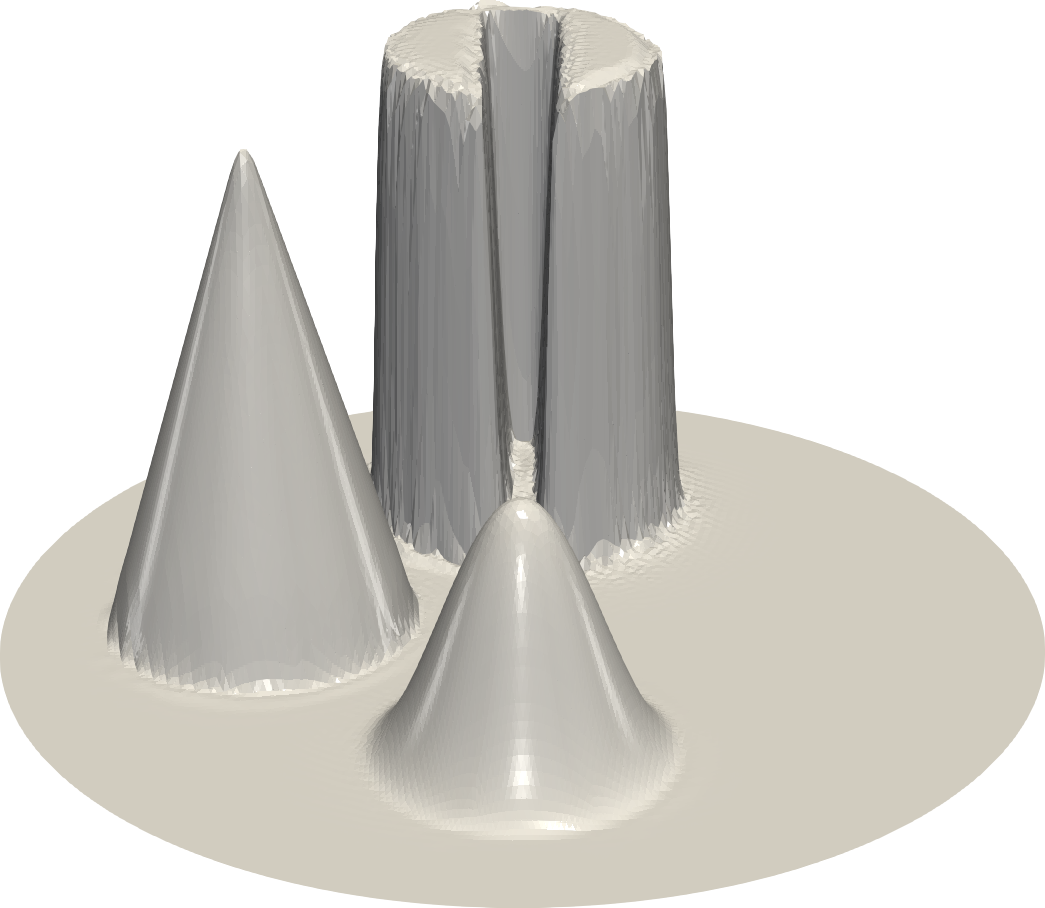}\hfil
\includegraphics[width=0.32\textwidth]{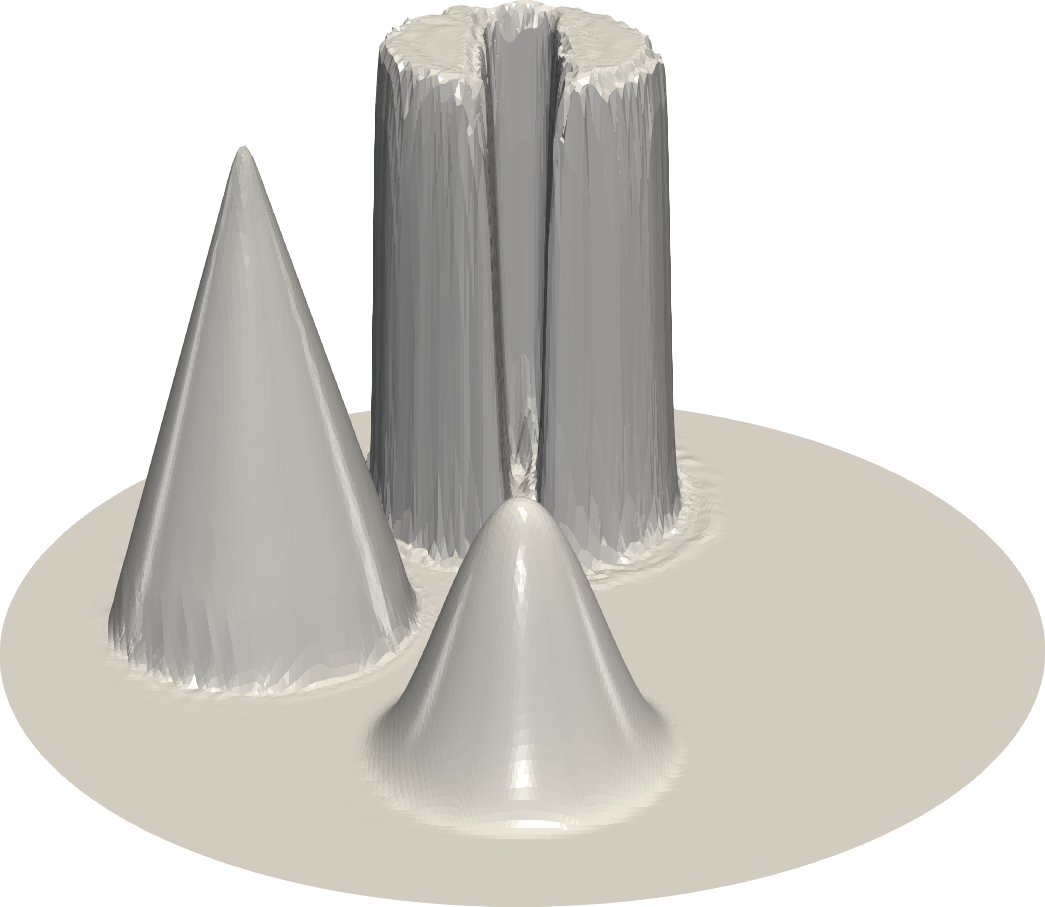}
\caption{Linear rotation problem, $\text{CFL}=0.5$, $t=1$. Left:
  $\polP_1$ approximation, $25\,049$ dofs; Center: $\polP_2$
  approximation, $25\,734$ dofs; Right: $\polP_3$ approximation,
  $27\,091$ dofs}
\label{Fig:three_body}
\end{figure}

\subsection{1D Euler, continuous finite elements}\label{Sec:Sod_and_Lax}

We continue with the compressible Euler equations.  We use the same
method as in \eqref{Sec:three_body_scalar_transport}, but this time
the high-order solution is the Galerkin method augmented with a graph
viscosity involving only the low-order connectivity induced by the
low-order $\polP_1$ mesh $\calT\up{L}$ defined in
\S\ref{Sec:high-degree_FE}.  The high-order graph viscosity applied on
this stencil is estimated by computing a commutator involving the
physical entropy.  The time stepping is done again with the maximally
efficient ERK3 Runge Kutta method; see \cite[Eq. (4.1)]{ErnGu:21+}.
Here again the locally conservative redistribution limiting algorithm
is applied at the end of each Runge-kutta stage to enforce the local
maximum and minimum principle on the density and the local minimum
principle on the physical entropy. The local IDP states $\bu_j^+$ in
each case are estimated using the low-order update with the IDP method
described in \citep{Guermond_Popov_SINUM_2016} and the hierarchical
$\polP_1$ meshes $\calT\up{L}$ described in
\S\ref{Sec:high-degree_FE}.

We solve the Sod and the Lax shocktubes which are two standard Riemann
problems. The computational domain is both cases is $\Dom=(0,1)$.
We show the observed convergence rates in Tables
\ref{Table:Sod}-\ref{Table:Lax} for $\polP_1$, $\polP_2$ and $\polP_3$
continuous finite elements. In each case the CFL number is set to
$0.5$. We observe in each the expected convergence rates.

\begin{table}[ht]\small \centering
  \caption{Convergence tests for Sod shocktube, $\Dom=(0,1)$,
    $t=0.225$. From left to right $\mathbb{P}_1$, $\mathbb{P}_2$,
    $\mathbb{P}_3$. $\text{CFL}=0.5$.}
  \label{Table:Sod}
  \begin{tabular}{c|c|c}
    \multicolumn{3}{c}{$\polP_1$}\\\hline
I &  $\delta_1$ & rate \\ \hline
    121 & 5.46E-02  &  -- \\
    241 & 2.46E-02  & 1.16\\
    481 & 1.43E-02  & 0.78\\
    961 & 6.71E-03  & 1.10\\
   1921 & 3.24E-03  & 1.05\\
   3841 & 1.97E-03  & 0.72
  \end{tabular}
  \begin{tabular}{c|c|c}
    \multicolumn{3}{c}{$\polP_2$}\\\hline
I &  $\delta_1$ & rate \\ \hline
    121 & 4.87E-02  &  -- \\
    241 & 1.95E-02  & 1.33\\
    481 & 1.27E-02  & 0.62\\
    961 & 5.66E-03  & 1.16\\
   1921 & 3.27E-03  & 0.79\\
   3841 & 2.20E-03  & 0.57
   \end{tabular}
  \begin{tabular}{c|c|c}
    \multicolumn{3}{c}{$\polP_3$}\\\hline
I &  $\delta_1$ & rate \\ \hline  
    121 & 8.80E-02  &  -- \\
    241 & 4.25E-02  & 1.06\\
    481 & 2.23E-02  & 0.93\\
    961 & 1.18E-02  & 0.92\\
   1921 & 6.39E-03  & 0.89\\
   3841 & 3.35E-03  & 0.93
   \end{tabular}
\end{table}

\begin{table}[ht]\small \centering
  \caption{ Convergence tests for Lax shocktube, $\Dom=(0,1)$, $t=0.225$. From left to right
    $\mathbb{P}_1$, $\mathbb{P}_2$, $\mathbb{P}_3$. $\text{CFL}=0.5$.}
  \label{Table:Lax}
  \begin{tabular}{c|c|c}
     \multicolumn{3}{c}{$\polP_1$}\\\hline
I &  $\delta_1$ & rate \\ \hline
    121 & 9.02E-02  &  -- \\
    241 & 4.25E-02  & 1.09\\
    481 & 2.14E-02  & 1.00\\
    961 & 1.30E-02  & 0.72\\
   1921 & 9.62E-03  & 0.43\\
   3841 & 5.91E-03  & 0.70
    \end{tabular}
  \begin{tabular}{c|c|c|c|c}
     \multicolumn{3}{c}{$\polP_2$}\\\hline
I &  $\delta_1$ & rate \\  \hline
    121 & 1.10E-01  &  -- \\
    241 & 5.79E-02  & 0.93\\
    481 & 3.14E-02  & 0.88\\
    961 & 2.19E-02  & 0.52\\
   1921 & 1.48E-02  & 0.57\\
   3841 & 8.51E-03  & 0.79
  \end{tabular}
  \begin{tabular}{c|c|c|c|c}
     \multicolumn{3}{c}{$\polP_3$}\\\hline
I &  $\delta_1$ & rate \\  \hline
    121 & 1.30E-01  &  -- \\
    241 & 6.59E-02  & 0.98\\
    481 & 3.82E-02  & 0.79\\
    961 & 2.20E-02  & 0.80\\
   1921 & 1.68E-02  & 0.39\\
   3841 & 1.09E-02  & 0.63
\end{tabular}
\end{table}


  


\subsection{2D Euler, FE} \label{Sec:Sod_and_Lax}

We finish with the Mach 3 step problem. The method (and code) is
exactly the same as that described in \S\ref{Sec:Sod_and_Lax}.  The
setting can be found, for instance, in
\citep[\S5.7]{Guermond_Nazarov_Popov_Tomas_2018} (and \S5.8 of the
supplementary file of \citep{Guermond_Nazarov_Popov_Tomas_2018}).  The
solution to this problem is computed up to $t=4$ using continuous
$\polP_1$, $\polP_2$, and $\polP_3$ finite elements. We show a
Schlieren-type snapshot of the density at $t= 4$ in
Figure~\ref{Fig:Wind_tunnel_Mach3_t4}. We observe the expected
features, including the Kelvin-Helmholtz instability of the contact
discontinuity emerging from the first triple point. The striations are
exaggerated by the Schlieren rendition of the density field.

\begin{figure}[htb]
  \begin{subfigure}{0.5\textwidth}
    \includegraphics[width=0.99\textwidth]{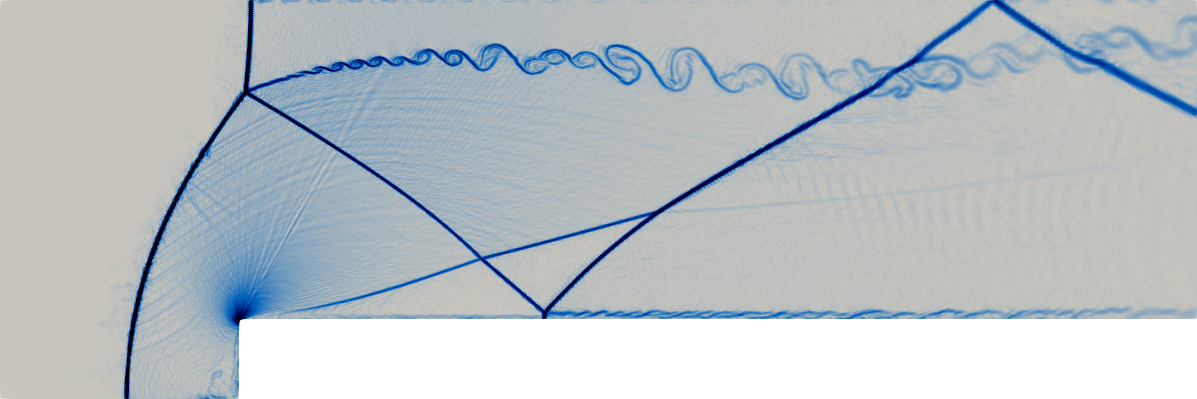}%
  \end{subfigure}%
  \begin{subfigure}{0.5\textwidth}
    \includegraphics[width=0.99\textwidth]{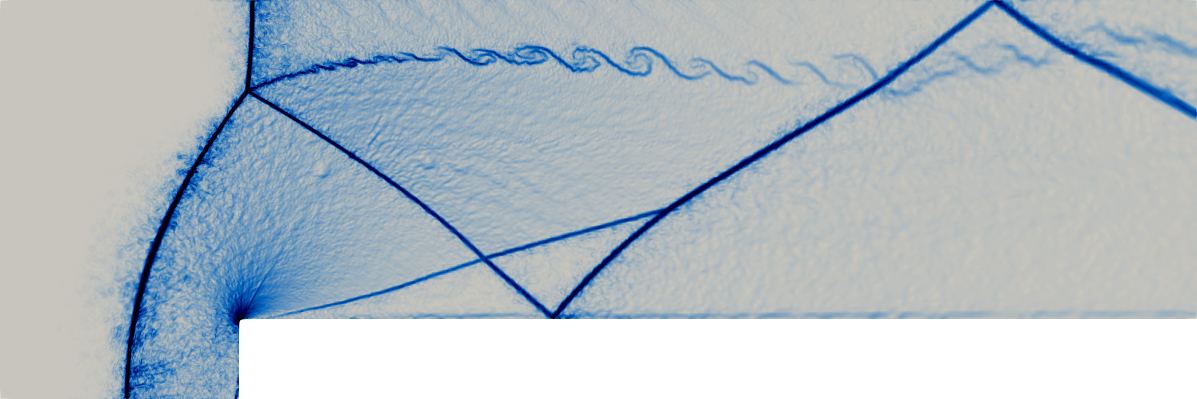}%
  \end{subfigure}\\
 \centering  \begin{subfigure}{0.5\textwidth}
   \includegraphics[width=0.99\textwidth]{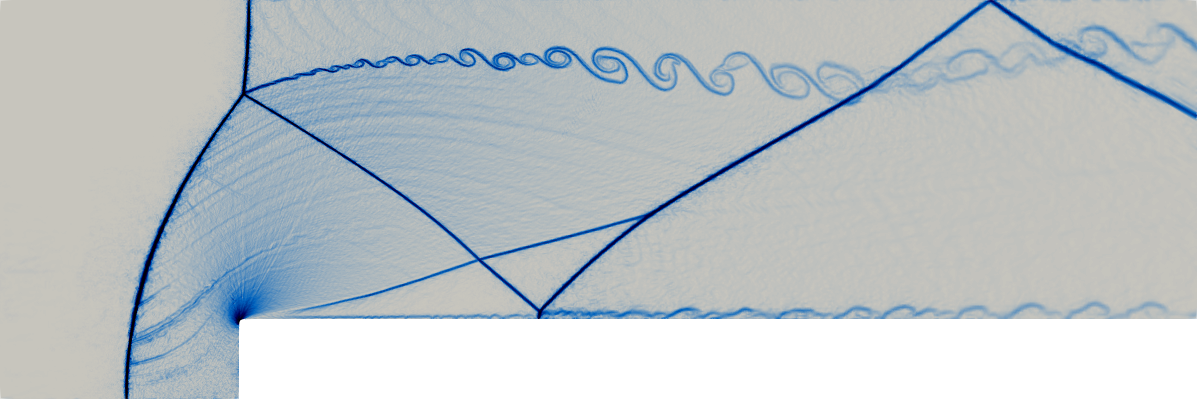}
    \end{subfigure}
\caption{Mach 3 step, $t=4$, $CFL=0.5$. Left to right and top to
  bottom: $\polP_1$ $207\,340$ dofs, $\polP_2$ $136\,487$, $\polP_3$
  $306538$ dofs.}
\label{Fig:Wind_tunnel_Mach3_t4}
\end{figure}

\section*{Acknowledgments} The author has been made aware by M.~Shashkov of the existence of
``repair'' methods during the North American High-Order Methods
Conference held in Santa Fe, NM, June 1--3 2026.  After reading a
first draft of this paper, E.~Tovar also made the author aware of the
sweeping techniques by
\citep{Liu_Cheng_Shu_JSC_2017,Griffin_Shu_RMathSci_2026}. The input
from these two colleagues is greatly appreciated.

\section*{Data availability statements} No data was used for this paper.

\bibliographystyle{abbrvnat}
\bibliography{ref}

\end{document}


%% file: FIGS/submesh_quad_high_order.pdf_t
\begin{picture}(0,0)%
\includegraphics{FIGS/submesh_quad_high_order.pdf}%
\end{picture}%
\setlength{\unitlength}{4144sp}%
\begin{picture}(6946,5369)(-322,-19958)
\put(901,-15811){\makebox(0,0)[lb]{\smash{\fontsize{24}{28.8}\usefont{T1}{ptm}{m}{n}{\color[rgb]{0,0,0}$K$}%
}}}
\put(4186,-15811){\makebox(0,0)[lb]{\smash{\fontsize{24}{28.8}\usefont{T1}{ptm}{m}{n}{\color[rgb]{0,0,0}$K'$}%
}}}
\put(2972,-16346){\makebox(0,0)[lb]{\smash{\fontsize{28}{33.6}\usefont{T1}{ptm}{m}{n}{\color[rgb]{0,0,0}$i$}%
}}}
\end{picture}%

%% file: FIGS/submesh_high_order.pdf_t
\begin{picture}(0,0)%
\includegraphics{FIGS/submesh_high_order.pdf}%
\end{picture}%
\setlength{\unitlength}{4144sp}%
\begin{picture}(7171,7169)(-322,-13883)
\put(2521,-7891){\makebox(0,0)[lb]{\smash{\fontsize{24}{28.8}\usefont{T1}{ptm}{m}{n}{\color[rgb]{0,0,0}$K$}%
}}}
\put(4456,-7981){\makebox(0,0)[lb]{\smash{\fontsize{24}{28.8}\usefont{T1}{ptm}{m}{n}{\color[rgb]{0,0,0}$K'$}%
}}}
\put(3691,-9376){\makebox(0,0)[lb]{\smash{\fontsize{24}{28.8}\usefont{T1}{ptm}{m}{n}{\color[rgb]{0,0,0}$i$}%
}}}
\end{picture}%

%% file: FIGS/submesh_dg_triangles_high_order.pdf_t
\begin{picture}(0,0)%
\includegraphics{FIGS/submesh_dg_triangles_high_order.pdf}%
\end{picture}%
\setlength{\unitlength}{4144sp}%
\begin{picture}(6999,6999)(8539,-13753)
\put(13276,-7981){\makebox(0,0)[lb]{\smash{\fontsize{24}{28.8}\usefont{T1}{ptm}{m}{n}{\color[rgb]{0,0,0}$K'$}%
}}}
\put(9556,-10291){\makebox(0,0)[lb]{\smash{\fontsize{24}{28.8}\usefont{T1}{ptm}{m}{n}{\color[rgb]{0,0,0}$K''$}%
}}}
\put(11776,-8997){\makebox(0,0)[lb]{\smash{\fontsize{24}{28.8}\usefont{T1}{ptm}{m}{n}{\color[rgb]{0,0,0}$i$}%
}}}
\put(11206,-7756){\makebox(0,0)[lb]{\smash{\fontsize{24}{28.8}\usefont{T1}{ptm}{m}{n}{\color[rgb]{0,0,0}$K$}%
}}}
\end{picture}%

%% file: FIGS/submesh_submesh_quad_high_order.pdf_t
\begin{picture}(0,0)%
\includegraphics{FIGS/submesh_submesh_quad_high_order.pdf}%
\end{picture}%
\setlength{\unitlength}{4144sp}%
\begin{picture}(6946,5369)(-322,-19958)
\put(3024,-16376){\makebox(0,0)[lb]{\smash{\fontsize{28}{33.6}\usefont{T1}{ptm}{m}{n}{\color[rgb]{0,0,0}$i$}%
}}}
\put(901,-15811){\makebox(0,0)[lb]{\smash{\fontsize{24}{28.8}\usefont{T1}{ptm}{m}{n}{\color[rgb]{0,0,0}$\calT\upL(i)$}%
}}}
\end{picture}%

%% file: FIGS/submesh_submesh_triangles_high_order.pdf_t
\begin{picture}(0,0)%
\includegraphics{FIGS/submesh_submesh_triangles_high_order.pdf}%
\end{picture}%
\setlength{\unitlength}{4144sp}%
\begin{picture}(7171,7169)(15203,-13883)
\put(19081,-9511){\makebox(0,0)[lb]{\smash{\fontsize{24}{28.8}\usefont{T1}{ptm}{m}{n}{\color[rgb]{0,0,0}$i$}%
}}}
\put(17956,-7801){\makebox(0,0)[lb]{\smash{\fontsize{24}{28.8}\usefont{T1}{ptm}{m}{n}{\color[rgb]{0,0,0}$\calT\upL(i)$}%
}}}
\end{picture}%

%% file: FIGS/submesh_submesh_dg_triangles_high_order.pdf_t
\begin{picture}(0,0)%
\includegraphics{FIGS/submesh_submesh_dg_triangles_high_order.pdf}%
\end{picture}%
\setlength{\unitlength}{4144sp}%
\begin{picture}(7171,7169)(15203,-13883)
\put(19081,-9511){\makebox(0,0)[lb]{\smash{\fontsize{24}{28.8}\usefont{T1}{ptm}{m}{n}{\color[rgb]{0,0,0}$i$}%
}}}
\put(17821,-8971){\makebox(0,0)[lb]{\smash{\fontsize{24}{28.8}\usefont{T1}{ptm}{m}{n}{\color[rgb]{0,0,0}$\calT\upL(i)$}%
}}}
\end{picture}%